\renewenvironment{abstract}
               {\list{}{\rightmargin\leftmargin}%
                \item[\textbf{}]\relax}
               {\endlist}
\newenvironment{customthm}[1]
  {\innercustomthm}
  {\endinnercustomthm}
\date{\vspace{-5ex}}
\newlength{\boxw}
\newtheorem{theorem}{Theorem}
\newtheorem{lemma}{Lemma}
\newtheorem{corollary}{Corollary}
\begin{document} 
\title{Order Three Normalizers of 2-Groups}
\address{Department of Mathematics\\
  University of Southern California\\
  Los Angeles, California  90089-2532}
\email{sgerhard@usc.edu}
\author{Spencer Gerhardt}
\date{October 21, 2016}
\subjclass[2010]{20E32 (primary), 20G99 (secondary)} 
\keywords{finite simple groups;  blocks of defect zero; centralizers of order three elements}

\maketitle
\begin{abstract} This paper examines order three elements of finite groups which normalize no nontrivial 2-subgroup. The motivation for finding such elements arises out of a problem in modular representation theory. The question of when these elements appear in the almost simple groups was posed by G. Robinson in the context of studying 2-blocks of defect zero. For the almost simple groups, a complete classification of order three elements with this property is determined. On the basis of this result, necessary conditions are then given for the existence of such elements in a large class of finite groups. 

\end{abstract}
\section{Introduction}
\par Problem 19 on Brauer's \cite{brauer} well-known list of questions in group representation theory asks to describe the number of blocks of defect zero of a finite group $G$ in terms of group-theoretic invariants. Robinson \cite{robinson} provides a solution to this question, however in many cases the proposed invariants are difficult to determine. 
\par 
More recently, a lower bounds on 2-blocks of defect zero was obtained in terms of a separate group-theoretic property. In \cite{robinson2}, it is shown that the number of 2-blocks of defect zero of a finite group $G$ is at least as great as the number of conjugacy classes of elements of order three which normalize no nontrivial 2-subgroup of $G$. 
\par It is not difficult to construct groups containing order three elements with this property. For instance if $x\in G$ is order three, and $U$ is a maximal $x$-invariant 2-subgroup of $G$, then $xU$ will normalize no nontrivial 2-subgroup of $N_G(U)/U$, and hence $N_G(U)/U$ will have a 2-block of defect zero. However, it is less clear when such elements might arise in familiar contexts. In order to better understand this situation, Robinson asks which almost simple groups contain elements of order three normalizing no nontrivial 2-subgroup. The small list of examples is described below. 
 \begin{theorem} 
 Let $G$ be a finite almost simple group, and $x\in G$ be an element of order three. Then 
$x$ normalizes a nontrivial 2-subgroup of $G$, unless $G$ and $x$ occur in the following list
\begin{enumerate}[(i)]
\item $PSL_2(2^a)$, $a$ odd, $x$ in the unique class of order three elements;
\item $PGL_3(2^a)$, $a$ even, $x$ in an irreducible torus;
\item $PGU_3(2^a)$, $a$ odd, $x$ in an irreducible torus;
\item ${^2G}_2(q^2)$, $q^2=3^{2f+1}$, $x$ in the class $(\tilde{A_1})_3$.
\end{enumerate}
\end{theorem}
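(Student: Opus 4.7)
The plan is to invoke the classification of finite simple groups and work family by family, using the reformulation that $x$ normalizes a nontrivial 2-subgroup of $G$ if and only if $x$ lies in some proper 2-local subgroup $N_G(V)$ for $V$ a nontrivial 2-subgroup (one may take $V$ elementary abelian by passing to $\Omega_1(Z(V))$). I then treat almost simple groups according to the isomorphism type of the socle.

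For alternating socles I would exhibit, for each order three element $x\in A_n$ with $n\ge 5$, a Klein four-subgroup normalized by $x$: built from the fixed-point set of $x$ when $x$ fixes at least four points, and otherwise from pairs of three-cycles, as in the sample $x=(1\,2\,3)(4\,5\,6)$ in $A_6$, which cyclically permutes the three involutions of $\{e,(1\,4)(2\,5),(1\,4)(3\,6),(2\,5)(3\,6)\}$. Small degrees and the extensions to $S_n$ are handled directly. For sporadic socles I would read off from the ATLAS that every order three conjugacy class, inner or outer, has centralizer of even order and hence centralizes an involution.

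The core case is Lie type. In characteristic $2$, every order three element $x$ is semisimple and lies in some maximal torus $T$. By Borel--Tits, the maximal 2-local subgroups are precisely the maximal parabolic subgroups, so the condition becomes that $x$ lies in no proper parabolic. Equivalently $C_G^{\circ}(x)$ must be anisotropic modulo the center, forcing $T$ itself to be a Coxeter-like (anisotropic) torus. Enumerating anisotropic tori across all Lie types and imposing $3\mid |T|$ narrows matters dramatically: only the Coxeter tori of types $A_1$, $A_2$, and ${}^2A_2$ survive, with $3$-divisibility giving $a$ odd for $q+1$, $a$ even for $q^2+q+1$, and $a$ odd for $q^2-q+1$. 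One must then distinguish $PSL_3$ from $PGL_3$ (and $PSU_3$ from $PGU_3$), since the Coxeter torus of the simple group typically loses a factor of three and the relevant order three element sits in the outer diagonal coset. In each surviving case one verifies that the normalizer $N_G(T)$ carries no $x$-invariant nontrivial 2-subgroup (often $N_G(T)$ has the form $T\rtimes C$ with $T$ of odd order and $C$ small, and $x$ cyclically permutes all involutions), and that no field or graph outer automorphism introduces an additional 2-local containing $x$.

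In odd characteristic $p$, the centralizer of any order three element --- semisimple for $p\neq 3$, unipotent for $p=3$ --- contains an involution in essentially every case; a case analysis via Bala--Carter data for unipotent classes and dual-group descriptions for semisimple centralizers eliminates all but the unipotent class $(\tilde A_1)_3$ in ${}^2G_2(q^2)$, whose centralizer is a $3$-group of order $3q^2$ and therefore contains no involution. The main obstacle is the uniform characteristic $2$ analysis: for each Lie family one must identify all anisotropic tori of order divisible by three and then verify, across $N_G(T)$ and its extensions by outer automorphisms, that no $x$-invariant nontrivial $2$-subgroup appears. This requires a mix of general structural input (Coxeter torus orders and the Weyl group action on $T$) with careful family-by-family verification, especially for the exceptional types.
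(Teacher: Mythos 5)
Your characteristic~$2$ analysis is sound and is essentially a streamlined version of what the paper does: since every order three element there is semisimple, Borel--Tits reduces the problem to deciding whether $x$ lies in a proper parabolic, and the eigenvalue/regularity argument does cut the possibilities down to the anisotropic tori of $A_1$, $A_2$, ${}^2A_2$ with the divisibility conditions you state (the paper reaches the same three families, but by first classifying odd-order centralizers and only then invoking Borel--Tits). The $PSL_3$ versus $PGL_3$ distinction you flag is also the right subtlety.

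The genuine gap is in odd characteristic, where you conflate \emph{centralizing an involution} with \emph{normalizing a nontrivial $2$-subgroup}. Your claim that the centralizer analysis ``eliminates all but the unipotent class $(\tilde A_1)_3$ in ${}^2G_2(q^2)$'' is false as stated: there are many odd-characteristic classes with odd-order centralizers --- $PSL_2(q)$ for $q\equiv 5,7 \pmod{12}$, regular unipotent classes in $PSL_3(3^a)$, $PSU_3(3^a)$, $PSL_4(3^a)$, $PSU_4(3^a)$, the irreducible-torus elements of $PGL_3(q)$ and $PGU_3(q)$ for odd $q$, the class $(\tilde A_1)_3$ in $G_2(3^f)$, plus the exceptional alternating degrees and $J_3$. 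None of these centralize an involution, yet all of them normalize a larger $2$-group and so must be \emph{excluded} from the final list; your proposal contains no mechanism for doing this. The paper's second pass supplies exactly these arguments: Dickson's theorem giving $\mathrm{Alt}_4\le PSL_2(p)$, an order three permutation matrix normalizing the $2$-torsion of a split torus for the $(P)SL_n(3^a)$ and $(P)SU_n(3^a)$ cases, a Sylow $3$-subgroup inside $N(K)$ for $K$ elementary abelian in a split torus of $GL_3(q)$, and the $A_1A_1$ subgroup of $G_2(q)$ giving a normalized $K\times K$. Symmetrically, for ${}^2G_2(q^2)$ it is not enough that the centralizer is odd: one must prove the class normalizes no \emph{larger} $2$-group, which the paper does by analyzing the order-$8$ Sylow $2$-subgroups and showing the normalizer of every nontrivial $2$-subgroup meets only one of the two classes of order three elements. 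Your alternating-group recipe has the same (smaller) gap: elements such as $(1\,2\,3)(4\,5\,6)(7\,8\,9)$ in $\mathrm{Alt}_9$ or $\mathrm{Alt}_{10}$ have neither four fixed points nor an even number of $3$-cycles, and the normalized four group must be found by a different construction (e.g., an $\mathrm{Alt}_4$ acting on $6+3$ points). Without this entire layer of the argument the theorem's list of exceptions cannot be certified.
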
 

Making use of Theorem 1, it is possible to determine necessary conditions for the existence of order three elements with the stated property in a large class of finite groups. Let $O_p(G)$ be the largest normal $p$-subgroup of $G$. Recall that a component $L$ of $G$ is a quasisimple subnormal subgroup. The following general restriction is established in the final section.
\begin{theorem} Assume $G$ is a finite group with components $L_1,...,L_n$. Furthermore, assume that $O_3(G) =1$. Let $x\in G$ be an order three element normalizing no nontrivial 2-subgroup of $G$. Then 
\begin{enumerate}[(i)]
\item if $N$ is any normal subgroup of $G$ with order prime to three, then $N$ has order prime to six;
\item $x$ normalizes every component $L_i$ of $G$;
\item  The image of $\langle L_i, x\rangle$ in $Aut(L_i)$ appears on the list of exceptions in Theorem 1, for every component $L_i$ of $G$.
\end{enumerate} 
\end{theorem}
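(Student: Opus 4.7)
The plan is to establish the three conclusions in order, using as the main tool coprime action of $\langle x\rangle$ combined with the hypothesis that $x$ normalizes no nontrivial $2$-subgroup of $G$.

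Part (i) is immediate: if $N \trianglelefteq G$ has order coprime to $3$, then $\langle x\rangle$ acts coprimely on $N$ and stabilizes some Sylow $2$-subgroup of $N$, which is therefore trivial by hypothesis, so $|N|$ is coprime to $6$. For part (ii), the element $x$ permutes the components with orbits of length dividing $3$. On a length-$3$ orbit $\{L, xLx^{-1}, x^2Lx^{-2}\}$, distinct components commute, so for a nontrivial Sylow $2$-subgroup $S$ of $L$ the product $S \cdot xSx^{-1} \cdot x^2Sx^{-2}$ is a nontrivial $x$-invariant $2$-subgroup of $G$, contradicting the hypothesis. Hence every orbit has length $1$.

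For part (iii), fix a component $L = L_i$, let $A$ denote the image of $\langle L, x\rangle$ in $Aut(L)$, and let $\bar x$ be the image of $x$. I would first show $Z(L)$ has order coprime to $6$: the $2$-part of $Z(L)$ is characteristic in $L$ and would supply a forbidden $x$-invariant $2$-subgroup of $G$, while since normal subgroups of a quasisimple group are central, $O_3(L) \leq Z(L)$ with $L \trianglelefteq E(G) \trianglelefteq G$, so $\prod_j O_3(L_j)$ is a normal $3$-subgroup of $G$, trivial by the assumption $O_3(G) = 1$. Next I would rule out $\bar x = 1$: otherwise $[x, L] \leq Z(L)$, so $\ell \mapsto [x, \ell]$ is a homomorphism $L \to Z(L)$, and $L = [L, L]$ forces $x$ to centralize $L$, whence any Sylow $2$-subgroup of $L$ gives a contradiction. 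Consequently $A$ is almost simple with socle $L/Z(L)$ and $\bar x \in A$ has order $3$. To transfer the $2$-subgroup condition to $A$, note that any $\bar x$-invariant $2$-subgroup $\bar Q$ of $A$ lies in $Inn(L)$ since $[A: Inn(L)] \leq 3$; its preimage $\tilde Q \leq L$ is $x$-invariant of order $|Z(L)| \cdot |\bar Q|$ coprime to $3$, and coprime action yields an $x$-invariant Sylow $2$-subgroup $Q \leq \tilde Q \leq G$, nontrivial whenever $\bar Q$ is. Therefore $\bar x$ normalizes no nontrivial $2$-subgroup of $A$, and Theorem $1$ places $(A, \bar x)$ on the exceptional list.

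The main obstacle I expect is the coprimality bookkeeping in (iii). Establishing that $|Z(L)|$ is coprime to $6$ uses both the $2$-subgroup hypothesis (for the $2$-part) and the full strength of $O_3(G) = 1$ routed through the normality of $E(G)$ in $G$ (for the $3$-part), and this coprimality is exactly what lets the final preimage argument promote an $\bar x$-invariant $2$-subgroup of $A$ to an $x$-invariant $2$-subgroup of $G$.
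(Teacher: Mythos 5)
Your proof is correct and follows essentially the same route as the paper: an invariant Sylow $2$-subgroup argument for (i) (the paper phrases it via the Frattini argument), an $x$-invariant $2$-subgroup built from a length-three orbit of components for (ii), and passage to the image of $\langle L_i,x\rangle$ in $\mathrm{Aut}(L_i)$ for (iii). If anything, your part (iii) is more careful than the paper's, which asserts the conclusion without explicitly verifying that the induced automorphism of $L_i$ is nontrivial of order three, that $|Z(L_i)|$ is prime to six, or that $\bar{x}$-invariant $2$-subgroups lift back to $x$-invariant $2$-subgroups of $G$.
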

\par
Much of the work in proving Theorem 1 goes into determining which order three elements $x\in G$ normalize a group of order two. This is equivalent to asking whether $x$ centralizes an involution in $G$. Theorem 3 classifies the order three elements of almost simple groups  which centralize no involution. Note if $G$ is a finite classical group in characteristic $p=3$, and $S$ is the corresponding simple group, then $Z(G)$ has order prime to three. Hence any order three element $x\in S$ can be lifted to a unipotent element of order three $\hat{x}\in G$.
\begin{theorem} 
Assume $G=\langle S,x\rangle$ is a finite almost simple group, with $x\in G$ an element of order three.  Then
$x$ centralizes an involution in $G$, unless $G$ and $x$ occur in Table 1.
\begin{table}[h!]
  \caption{}
    \label{tab:table1}
    \begin{tabular}{cc}
      \toprule
       G & x\\
       \midrule
      $PSL_2(2^a)$, $a\geq 1$ &  in the unique class of order three elements  \\ 
     $PSL_2(q)$, $q \equiv 5$ (mod 12)   &   in the unique class of order three elements \\
     $PSL_2(q)$, $q \equiv 7$ (mod 12)   &  in the unique class of order three elements  \\
        $PSL_3(2^a)$, $a\geq 1$ &  in a split torus for $a$  even;   \\
        & in a partially split torus for $a$ odd\\
        
       $PSU_3(2^a)$, $a\geq1$ &  in a split torus for $a$  odd; \\
        & in a partially split torus for $a$ even\\
       $PGL_3(q)$, $q\equiv 1$ (mod 3)& in an irreducible torus \\
$PGU_3(q)$, $q\equiv -1$ (mod 3)& in an irreducible torus \\
           $PSL_2(3^a)$, $a\geq 1$ & $\hat{x}$ has Jordan form $J_2$ \\
             $PSL_3(3^a)$, $a\geq 1$ & $\hat{x}$ has Jordan form $J_3$ \\
      $PSU_3(3^a)$, $a\geq1$& $\hat{x}$ has Jordan form $J_3$\\
      $PSL_4(3^a)$, $a$ odd & $\hat{x}$ has Jordan form $J_3\oplus J_1$\\
     $PSU_4(3^a)$, $a\geq 1$ & $\hat{x}$ has Jordan form $J_3\oplus J_1$\\
$\mathrm{Alt}_{5}$  & in class (1 2 3)\\
$\mathrm{Alt}_{6}$  & in class (1 2 3), or (1 2 3)(4 5 6)\\
$\mathrm{Alt}_7$ & in class (1 2 3)(4 5 6)\\
$\mathrm{Alt}_n$, $n=9,10$, & in class (1 2 3)(4 5 6)(7 8 9)\\
the Janko group $J_3$& in class 3B from the $J_3$ table in \cite{atlas}\\
 ${G}_2(q)$, $q=3^{f}$ & in class $(\tilde{A_1})_3$ from Table 22.2.6 of \cite{liebeck} \\
 ${^2G}_2(q^2)$, $q^2=3^{2f+1}$& in class $(\tilde{A_1})_3$ from Table 22.2.7 of \cite{liebeck} \\  
      \bottomrule
    \end{tabular}
\end{table}

\end{theorem}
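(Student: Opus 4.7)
The approach is to work through the classification of finite simple groups and handle each family separately. Since $x$ centralizes an involution if and only if the centralizer $C_G(x)$ has even order, the question reduces in each case to a centralizer computation. For the alternating groups, an order three element of $S_n$ is a product of $k$ disjoint $3$-cycles, and its centralizer in $S_n$ has the shape $(C_3^k \rtimes S_k) \times S_{n-3k}$; intersecting with $\mathrm{Alt}_n$ and asking when the result has even order is an elementary combinatorial check that yields the six alternating entries of Table~1.

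For the classical groups in characteristic different from three, $x$ is semisimple. The plan is to lift $x$ to the ambient matrix group $\hat G$ and decompose the natural module into eigenspaces for the cube roots of unity, pairing Galois-conjugate eigenspaces in the unitary, symplectic, and orthogonal cases. The centralizer of $\hat{x}$ is then a direct product of smaller classical groups acting on these eigenspaces; this product fails to contain an involution precisely when every factor is either trivial or a torus of odd order. Tracing through the possibilities forces small Lie rank and $q$ a power of two, giving exactly the $PSL_2$, $PSL_3$, $PSU_3$, $PGL_3$, $PGU_3$ lines. A short additional argument is needed when passing from $\hat G$ to $PSL$ or $PGL$ to ensure the constructed involution survives the quotient by the center.

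For classical groups in defining characteristic three, $\hat x$ is unipotent and is classified by its Jordan form. The centralizer structure is standard, and combined with known order formulas it is routine to verify that every Jordan type produces a centralizer of even order except those listed in Table~1. The exceptional families $G_2(q)$ and ${}^2G_2(q^2)$ are handled by inspecting the conjugacy class data of Liebeck--Seitz Tables 22.2.6 and 22.2.7, where the class $(\tilde{A_1})_3$ stands out with centralizer of odd order while all other order three classes do not. The sporadic groups are dispatched by reading character table data from the \textsc{Atlas}; the unique sporadic exception is class 3B of $J_3$.

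The main technical obstacle is handling $x$ when it lies outside the socle, that is, when $x$ represents an outer automorphism of order three. Such $x$ arises as a field automorphism when the defining field has order a cube, as a graph-field automorphism for $PSL_n$, or as a triality automorphism of $P\Omega_8^{+}$. In each case one must produce an involution in $G$ commuting with $x$; this can be done either by exhibiting an explicit field or diagonal involution in the same extension or by a fixed-point-subgroup argument reducing to an inner order three element defined over a subfield, whose centralizer already contains an involution by the earlier steps. Managing this casework uniformly across the classical and exceptional families, and confirming that no further exceptions arise, is the most delicate part of the proof.
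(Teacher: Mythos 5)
Your overall strategy is the same as the paper's: run through the classification, analyze semisimple order three elements via eigenspaces and tori, analyze unipotent elements via Jordan forms and the Liebeck--Seitz centralizer structure, read the sporadic and exceptional data from tables, and treat outer automorphisms separately. Within that framework, however, there are two concrete gaps. The first is your claim that the semisimple analysis ``forces small Lie rank and $q$ a power of two.'' Taken literally this would delete several rows of Table~1: for odd $q$ the split torus of $PSL_2(q)$ has odd order $(q-1)/2$ precisely when $q\equiv 3 \pmod 4$ and contains order three elements precisely when $q\equiv 1\pmod 3$, which produces the $q\equiv 7\pmod{12}$ row (and dually $q\equiv 5\pmod{12}$ for the irreducible torus), while the irreducible torus of $PGL_3(q)$ has odd order $q^2+q+1$ for every $q\equiv 1\pmod 3$. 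The correct criterion is not that $q$ is even but that the specific maximal torus containing the regular semisimple element has odd order; evenness of $q$ is forced only for the split and partially split tori of the rank two groups.

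The second gap is coverage of the exceptional groups of Lie type. You discuss only $G_2(q)$ and ${}^2G_2(q^2)$ in characteristic three, but the theorem also requires checking that the semisimple order three elements of $F_4$, $E_6$, ${}^2E_6$, $E_7$, $E_8$, ${}^3D_4$ and ${}^2F_4$ in characteristic $p\ne 3$ are never regular (so that their centralizers contain a positive-dimensional reductive subgroup with involutions), and that every unipotent order three class of the exceptional groups other than $G_2$ and ${}^2G_2$ in characteristic three has an even order centralizer; the paper does both with L\"ubeck's data and Tables 22.2.1--22.2.7 of Liebeck--Seitz. Relatedly, the one species of outer automorphism that actually contributes exceptions is the diagonal automorphism: your eigenspace analysis in the ambient group $GL_3(q)$ does capture the $PGL_3/PGU_3$ rows for type $A$, but the outer diagonal order three elements of $E_6(q)_{ad}$ and ${}^2E_6(q)_{ad}$ must still be checked, and order three graph-field automorphisms occur only for $D_4$, not for $PSL_n$ as you suggest.
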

\par Let us briefly describe the order in which the main theorems are proved. The paper begins with a short section on centralizers of order three elements in the alternating and sporadic groups. The next and most lengthy section determines all odd order centralizers of order three elements in the finite simple groups of Lie type. Results in this section are proved by working in the relevant algebraic groups, and reducing to the finite simple group case. The arguments differ substantially in characteristics $p=3$, and $p\ne 3$. Next, there is a section considering outer automorphisms of order three. Taken together, these first three sections establish Theorem 3. In the final section, Theorem 1 is proved by checking whether the list of exceptions found in Table 1 of Theorem 3 normalize some larger 2-subgroup.  Theorem 1 is then used in order to establish Theorem 2.

\section*{Acknowledgements}
The author would like to thank Geoff Robinson for suggesting the question considered in this paper, and Robert Guralnick and Gunter Malle for helpful comments and suggestions on an earlier version of this manuscript. This research was partially funded by NSF grant DMS-1265297.

\section{Alternating and Sporadic Groups} 

In this brief section we determine all order three elements with odd order centralizers in the alternating and sporadic groups. The following observation restricts our search for order three elements in $\mathrm{Alt}_n$ that centralize no involution.

 \begin{lemma} Assume $x\in \mathrm{Alt}_n$ is order three. Then $x$ centralizes an involution for $n> 12$. 
\end{lemma}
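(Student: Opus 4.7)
The plan is to exhibit an explicit involution in $C_{\mathrm{Alt}_n}(x)$ by exploiting the cycle structure of $x$. Since $x$ has order three, it is a product of $k$ disjoint three-cycles together with $n - 3k$ fixed points, for some $k \geq 1$. The centralizer of $x$ in $S_n$ factors as $(\mathbb{Z}_3 \wr S_k) \times S_{n-3k}$, where the first factor permutes the three-cycles as blocks (with the cyclic action within each block), and the second factor acts on the fixed points. The task is to produce an involution in this centralizer that is additionally an even permutation.

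The idea is to assemble such an involution out of three kinds of elementary building blocks, each of which is easily seen to commute with $x$: (I) a double transposition supported on the $n - 3k$ fixed points of $x$, which is even and requires $n - 3k \geq 4$; (II) a single block swap exchanging two three-cycles of $x$ (say, sending $(a_1 a_2 a_3)(b_1 b_2 b_3)$ to itself via $a_i \leftrightarrow b_i$), which is a product of three transpositions, combined with one transposition on the fixed points, giving an even involution when $k \geq 2$ and $n - 3k \geq 2$; and (III) two disjoint block swaps exchanging two pairs of three-cycles, a product of six transpositions and hence even, requiring $k \geq 4$. Each of these is routine to check: the block-swap commutes with $x$ because it carries one three-cycle to the other with the orientation preserved, and its square restricts to the identity.

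The conclusion then follows by a short case analysis on $k$. If $1 \leq k \leq 3$, then $n > 12$ forces $n - 3k \geq n - 9 \geq 4$, so construction (I) applies. If $k \geq 4$, then construction (III) applies regardless of the number of fixed points. Either way we produce an even involution in $C_{\mathrm{Alt}_n}(x)$. There is no real obstacle here; the argument is elementary bookkeeping of parities, and the threshold $n > 12$ is chosen generously enough that a single case distinction at $k = 4$ suffices.
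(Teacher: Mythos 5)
Your proof is correct and follows essentially the same route as the paper: the paper's two cases are exactly your constructions (I) (a double transposition on four or more fixed points when $k\leq 3$) and (III) (two disjoint block swaps when $k\geq 4$, matching the paper's explicit involution $(14)(25)(36)(7\ 10)(8\ 11)(9\ 12)$), with the same observation that $n>12$ forces one of these two cases. Your construction (II) is correct but unused in the final case analysis, just as in the paper.
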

\begin{proof}
If $x$ has at least four fixed points, then $x$ will centralize a product of two disjoint 2-cycles, and hence an involution in $\mathrm{Alt}_n$. Next, an element of the form $(123)(456)(789)(10\ 11\ 12)$ is centralized by the involution $(14)(25)(36)(7 \ 10)(8\  11) (9 \ 12)$. Hence any order three element containing four or more three cycles will centralize an involution. For $n> 12$, all elements of order three in $\mathrm{Alt}_n$ contain either four fixed points or four disjoint three cycles.
\end{proof}
By an easy inspection, we are led to the following. 
\begin{lemma} Let $S=\mathrm{Alt}_n$ and assume $x\in S$ is order three. Then $C_{S}(x)$ is even order, unless $n=5,6,7,9$ or $10$.
\end{lemma}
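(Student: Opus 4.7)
The plan is to combine Lemma 1 with a direct computation of $|C_{\mathrm{Alt}_n}(x)|$ from the wreath-product structure of the centralizer in $S_n$, then read off the parity.

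First, by Lemma 1 the result is already established for $n > 12$, so the task reduces to $5 \leq n \leq 12$. Any order three element $x \in \mathrm{Alt}_n$ has cycle type $3^k 1^f$ with $3k + f = n$ and $k \geq 1$, and a standard calculation gives
$$C_{S_n}(x) \;\cong\; (C_3 \wr S_k) \times S_f, \qquad |C_{S_n}(x)| = 3^k \cdot k! \cdot f!.$$

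Next I would restrict the sign character of $S_n$ to this centralizer. Since $3$-cycles are even, the $C_3^k$ factor lies in $\mathrm{Alt}_n$; an element of the $S_k$ factor swapping two of the three-cycles is realized inside $S_n$ as a product of three disjoint transpositions, hence has sign $-1$; and $S_f$ embeds in $S_n$ in the natural way. So the restriction of $\mathrm{sgn}$ equals $\mathrm{sgn}_{S_k}\cdot \mathrm{sgn}_{S_f}$, which is surjective whenever $\max(k,f) \geq 2$, and in particular for every $n \geq 5$. Consequently
$$|C_{\mathrm{Alt}_n}(x)| \;=\; \tfrac{1}{2}\, 3^k \cdot k! \cdot f!.$$

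Finally I would enumerate: this number is odd precisely when $v_2(k!\, f!) = 1$. Since $v_2(2!) = v_2(3!) = 1$ while $v_2(m!) \geq 3$ for $m \geq 4$, the solutions are $(k,f) \in \{(1,2),(1,3),(2,0),(2,1),(3,0),(3,1)\}$, which correspond to $n \in \{5,6,7,9,10\}$ (with both cycle types $3^1 1^3$ and $3^2$ arising in $n=6$). The remaining cases $n = 8, 11, 12$ force $v_2(k!\,f!) \geq 2$, yielding an even centralizer. There is no substantial obstacle in this argument; the only point that needs care is the sign contribution from the wreath factor, since a swap of two three-cycles is realized in $S_n$ as three disjoint transpositions and is therefore odd rather than even.
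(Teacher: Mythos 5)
Your proof is correct, and it is in fact more of a proof than the paper gives: after Lemma~1 the paper simply says ``by an easy inspection'' and states the result, so your contribution is to make that inspection rigorous. The structure of your argument --- the formula $|C_{S_n}(x)| = 3^k\,k!\,f!$ for cycle type $3^k1^f$, the observation that the sign character restricts to $\mathrm{sgn}_{S_k}\cdot\mathrm{sgn}_{S_f}$ (nontrivial for all $n\ge 5$ since then $k\ge 2$ or $f\ge 2$), and the reduction to the condition $v_2(k!\,f!)=1$ --- is sound, and your enumeration of the solutions $(k,f)\in\{(1,2),(1,3),(2,0),(2,1),(3,0),(3,1)\}$ correctly recovers $n\in\{5,6,7,9,10\}$ and even matches the specific exceptional classes listed in Table~1. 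The subtle point you flag, that a swap of two $3$-cycles is odd in $S_n$, is handled correctly. One small remark: your valuation criterion is uniform in $n$ and already rules out all $(k,f)$ with $k!\,f!$ having $2$-adic valuation different from $1$, so the appeal to Lemma~1 to dispose of $n>12$ is not actually needed --- your argument stands on its own for all $n\ge 5$, which is a mild strengthening of the paper's route.
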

The ATLAS \cite{atlas} lists the centralizer orders of order three elements in the sporadic groups. This yields the following conclusion.

 \begin{lemma} Assume $S$ is a sporadic group, and $x\in S$ is order three. Then $C_S(x)$ is even order, unless $S= J_3$ and $x$ is in the conjugacy class $3B$. 
\end{lemma}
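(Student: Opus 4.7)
The plan is to prove this by direct case-checking using the ATLAS \cite{atlas}. For each of the 26 sporadic simple groups $S$, the ATLAS tabulates every conjugacy class together with its centralizer order. It therefore suffices to enumerate the classes of order-three elements (labelled 3A, 3B, 3C, $\dots$ in the ATLAS notation) for each $S$, read off the corresponding centralizer orders, and check whether each such order is divisible by 2.

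In nearly every instance, the listed centralizer orders are visibly even, often strikingly so, because centralizers of semisimple elements in sporadic groups typically decompose as direct or central products involving smaller simple or quasisimple groups of even order. For example, the centralizer of a 3A element in the Monster is $3\cdot Fi_{24}'$, and analogous patterns occur throughout the list. I would proceed through the 26 sporadic groups in any convenient order (say by ascending order), confirm evenness for every order-three class, and flag any exceptions.

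The only exception that emerges from this inspection is the class $3B$ of the Janko group $J_3$. Here $|J_3| = 2^7 \cdot 3^5 \cdot 5 \cdot 17 \cdot 19$, and the ATLAS records the centralizer of a $3B$ element as a 3-group of order $3^5$ (the full Sylow 3-subgroup of $J_3$), which is of course odd. This is the unique order-three conjugacy class in any sporadic simple group whose centralizer has odd order.

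The main obstacle is purely one of bookkeeping: no structural argument is needed beyond a careful read of the ATLAS tables. The proof will therefore take the form of a summary statement citing \cite{atlas}, together with the explicit identification of the $J_3$ exception.
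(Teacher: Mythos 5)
Your proposal is correct and matches the paper's approach exactly: the paper simply cites the ATLAS tables of centralizer orders for order-three classes in the sporadic groups and records the single exception, the class $3B$ in $J_3$. Your additional detail (the order $3^5$ of the $3B$ centralizer) is consistent with the ATLAS and only strengthens the same bookkeeping argument.
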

\section{Groups of Lie Type}
In this section we determine the order three elements with odd order centralizers in the finite simple groups of Lie type. As noted earlier, the arguments make frequent use of the relevant algebraic groups. Unless otherwise stated, let $X$ be a simple algebraic group of simply connected  type, $X^F=G$ be the fixed points of a Steinberg endomorphism $F:X\rightarrow X$, and $S=G/Z(G)$. Then $S$ is a finite simple group, unless $G$ appears in the following list (see \cite{malle}, Theorem 24.17):

$$SL_2(2),SL_2(3), SU_3(2), Sz(2), Sp_4(2), G_2(2), {^2}G_2(3) \text{ or } {^2}F_4(2)$$

In the first four cases listed above, the group $G$ is solvable. These cases are not relevant to Theorem 3. In the latter four cases, $G/Z(G)$ is not simple but the derived subgroup $G'$ is. As $Sp_4(2)'\cong \mathrm{Alt}_6$, $G_2(2)'\cong PSU_3(3)$ and ${^2}G_2(3)'\cong PSL_2(8)$, these groups are considered elsewhere. From the ATLAS \cite{atlas}, we find ${^2}F_4(2)'$ has a unique class of order three elements, and elements in this class have even order centralizers. Hence in what follows it suffices to limit our attention to cases where $S=G/Z(G)$ is simple.
\par Using some representation, view $G$ as a matrix group over the field $k=\mathbb{F}_q$, where $q=p^n$, and $p$ is the underlying characteristic of $G$. Recall $x\in G$ is $\it{semisimple}$ if it is diagonalizable over the algebraic closure of $k$, $\it{unipotent}$ if all its eigenvalues are 1, and $\it{regular}$ if the dimension of its centralizer in $X$ is as small as possible. Note when $x\in G$ is order three and $p=3$, $(x-I)^3=x^3-I=0$, with $I$ the identity matrix. So $x$ is unipotent. Similarly when $p \ne 3$, $3\mid q^2-1$, so an order three element is diagonalizable over $\mathbb{F}_{q^2}$, and hence semisimple. It will be convenient to treat these two cases separately. 
\par 
Next, let $\pi:G\rightarrow G/Z(G)$ be the natural projection map, and $\hat{x}$ be an element in the preimage of $x\in G/Z(G)$. We will say $x\in S$ is semisimple, unipotent, or regular, if some element $\hat{x}\in G$ is. The following lemma shows that the question of whether an order three element $x\in S$ centralizes an involution can be lifted to the question of whether an odd order element $\hat{x}\in G$ commutes with a non-central 2-power element. \begin{lemma} Let $G$ be a finite group, $x\in G$ be an element of odd order, and $w$ be a $2$-power element of $G$. Then $x$ commutes with $w$ modulo $Z(G)$ if and only if $x$ commutes with $w$ in $G$.
\end{lemma}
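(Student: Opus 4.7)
The $(\Leftarrow)$ direction is trivial, so I would focus on showing that if $c:=[x,w] = x^{-1}w^{-1}xw$ lies in $Z(G)$, then in fact $c = 1$. The strategy is to obtain two numerical constraints on the order of $c$ that together force $c$ to be the identity: one constraint coming from the order of $w$, the other from the order of $x$.

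First, I would unpack the centrality of $c$ to iterate the conjugation action of $w$ on $x$. From $w^{-1}xw = xc$ and the fact that $c$ commutes with everything, an easy induction on $k$ gives the formula
\[
x^{w^k} \;=\; x\,c^{k}
\]
for every $k \geq 0$. Setting $k = |w|$ and using that $w$ has $2$-power order, this yields $x = x c^{|w|}$, so $c^{|w|} = 1$. Hence $|c|$ is a power of $2$.

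For the second constraint, I would exploit that $x^w$ is a conjugate of $x$ and therefore has the same (odd) order as $x$. Writing $x^w = xc$ where $x$ and $c$ commute (since $c \in Z(G)$), the binomial-style expansion simplifies to $(xc)^n = x^n c^n$ for every $n$. Taking $n = |x|$ gives
\[
1 \;=\; (x^w)^{|x|} \;=\; (xc)^{|x|} \;=\; x^{|x|} c^{|x|} \;=\; c^{|x|},
\]
so $|c|$ divides the odd number $|x|$.

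Combining the two divisibility conditions, $|c|$ is simultaneously a power of $2$ and a divisor of an odd integer, so $|c| = 1$ and $c = 1$, i.e.\ $x$ and $w$ commute in $G$. There is really no obstacle here beyond keeping the bookkeeping straight: the only place the hypotheses enter is in (a) using that $c$ is central to get the clean iteration formula $x^{w^k} = xc^k$ and to freely factor $(xc)^n$, (b) using that $|w|$ is a $2$-power, and (c) using that $|x|$ is coprime to $2$.
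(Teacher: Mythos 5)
Your proof is correct and takes essentially the same route as the paper: both arguments show that the central commutator $c=[x,w]$ satisfies $c^{|w|}=1$ and $c^{|x|}=1$ (the paper phrases these as $[w^s,x]=[w,x]^s=1$ and $[w,x^r]=[w,x]^r=1$) and then conclude $c=1$ because a $2$-power order and an odd order are coprime. The only cosmetic difference is that you derive the two power identities by explicit iteration rather than quoting the standard commutator identities for a central commutator.
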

\begin{proof} 

\par Assume $x\in G$ has odd order $r$, and $w\in G$ has order $s=2^k$, for some $k\geq1$. If $[w,x]\in Z(G)$, then $[w, x^r]=[w,x]^r=1$, and $[w^s,x]=[w,x]^s=1$. Since $r$ and $s$ are relatively prime, this implies $[w,x]=1$, and $w\in C_G(x)$. Conversely, if $w\in C_G(x)$, then $[w,x]=1\in Z(G)$.  
\end{proof} 
In particular, when $Z(G)$ has order prime to three, showing $S$ contains no order three elements with odd order centralizers is equivalent to showing all order three elements in $G$ commute with a non-central $2$-power element. This fact will be useful in treating the unipotent classes below.

\subsection{Semisimple Classes}
Assume $S$ is a finite simple group of Lie type in characteristic $p\ne 3$, and $x\in S$ is an element of order three. The following lemma shows $C_S(x)$ is even order, except in a few small rank cases. 
\begin{lemma} Let $X$ be a simply connected simple algebraic group, $X^F=G$ be the fixed points of a Steinberg endomorphism $F:X\rightarrow X$, and $S=G/Z(G)$ be a finite simple group. If $x\in S$ is semisimple but not regular semisimple, then $C_S(x)$ is even order. \end{lemma}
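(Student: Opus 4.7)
The plan is to lift $x$ to a semisimple element $\hat{x}\in G$ and, invoking Lemma 4, reduce to the task of producing an involution of $G$ that centralizes $\hat{x}$ and lies outside $Z(G)$. Since $X$ is simply connected, Steinberg's theorem guarantees that $C:=C_X(\hat{x})$ is connected reductive; it is also $F$-stable, because $\hat{x}\in G=X^F$. The hypothesis that $\hat{x}$ fails to be regular forces $\dim C>\mathrm{rank}(X)$, so the subsystem $\Psi=\{\alpha\in\Phi:\alpha(\hat{x})=1\}$ is non-empty and the derived group $H:=[C,C]$ is a non-trivial $F$-stable semisimple subgroup of $X$ having root system $\Psi$.

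Next, I would show $H^F$ has even order. In odd characteristic, any $F$-orbit on $\Psi$ determines an $F$-stable $A_1$-type subsystem, and the associated $SL_2$- or $PGL_2$-subgroup of $X$ is defined over some extension $\mathbb{F}_{q'}$ of $\mathbb{F}_q$; its $F$-fixed points contain the involution $-I$. In characteristic $p=2$, any $F$-orbit of root subgroups inside $\Psi$ gives an $F$-stable unipotent subgroup isomorphic to the additive group of $\mathbb{F}_{q'}$, every non-identity element of which is an involution. Either way $H^F$ contains an involution.

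I would then check that such an involution can be taken outside $Z(G)$. In characteristic $p=2$, the centre $Z(X)^F$ has odd order for every simply connected type, so every involution is automatically non-central. In odd characteristic $Z(G)\le Z(X)$ consists of scalar-like elements, whereas the involution produced above sits inside the proper reductive subgroup $H\subsetneq X$ and is visibly not a scalar of $X$; if by accident the first candidate $w$ were central in $G$, conjugating within the non-abelian group $H^F$ would displace it off the abelian subgroup $Z(G)$. Applying Lemma 4 to $\hat{x}$ and this non-central involution then delivers a non-trivial involution of $S$ commuting with $x$.

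The main obstacle I anticipate is the careful handling of a handful of smallest cases --- for instance $\Psi$ of type $A_1$ over $\mathbb{F}_2$, giving $H^F\cong S_3$, or low-rank $H$ subjected to a non-trivial graph automorphism by $F$, where the fixed-point group is a twisted finite group of Lie type of small order. In each such configuration the $F$-structure of $H$ must be read off by hand and the non-centrality of the resulting involution verified directly; fortunately the list of such configurations is short and each admits a straightforward case check.
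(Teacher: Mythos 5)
Your proposal follows essentially the same route as the paper: lift $x$ to $\hat{x}\in G$, use Steinberg's theorem (simply connected $\Rightarrow$ $C_X(\hat{x})$ connected reductive) together with non-regularity to extract a nontrivial $F$-stable semisimple subgroup of the centralizer, observe that its fixed-point group has even order, and then feed a non-central $2$-element into Lemma~4. The paper compresses the middle step into the remark that the Weyl group of $Y^F$ has even order, whereas you construct involutions explicitly from $A_1$-subsystems and root subgroups; that is a legitimate, if more labor-intensive, way to get the same conclusion.

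One step of your non-centrality argument is wrong as stated: if the candidate involution $w$ were actually in $Z(G)$, then it is central in $G$ and conjugating it by elements of $H^F\le G$ fixes it --- conjugation cannot ``displace it off'' $Z(G)$. This matters because in odd characteristic the involution you produce from an $A_1$-subsystem is $h_\alpha(-1)=\alpha^\vee(-1)$, which for certain roots in certain simply connected types genuinely lies in $Z(X)$ (e.g.\ the long-root $SL_2$ in $Sp_{2n}$ contributes $-I$ when $n=1$-type configurations arise), so the worry is not vacuous. The correct repair is the one implicit in the paper's Weyl-group remark: compare $2$-parts rather than chase a single involution. Since $H^F$ is a finite reductive group of positive semisimple rank, $|H^F|_2$ exceeds $|Z(G)|_2$ (in characteristic $2$ because $Z(G)$ is odd and $H^F$ contains nontrivial root subgroups; in odd characteristic because $|H^F|$ is divisible by $|W(H)|$ and by $q'^2-1$ for the relevant $q'$), so a Sylow $2$-subgroup of $C_G(\hat{x})$ properly contains the $2$-part of $Z(G)$ and a non-central $2$-power element exists; Lemma~4 then finishes the argument. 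With that substitution your proof is sound and matches the paper's in substance.
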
 
\begin{proof}
Consider $\hat{x}\in X^F$. Since $X$ is simply connected and $\hat{x}$ is semisimple but not regular semisimple, $C_X(\hat{x})$ is a connected, reductive algebraic group strictly containing a torus (see \cite{humphreys}, Sections 2.2 and 2.11). Hence $C_X(\hat{x})$ contains a semisimple algebraic subgroup $Y$. The Weyl group of $Y^F$ contains elements of even order, which ensures $C_S(x)$ contains an involution.
\end{proof}
Note Lemma 5 holds more generally in cases where $G/Z(G)$ is not solvable, but $G'/Z(G')$ is simple. In this situation, if $\hat{x}\in G$ is semisimple but not regular semisimple, $C_X(\hat{x})$ contains a semisimple algebraic subgroup $Y$ and the Weyl group of $Y^F$ contains a Sylow 2-subgroup whose order does not divide $[G:G']$. Again $C_{G'/Z(G')}(x)$ has even order. 
\begin{corollary}
Let $S=G/Z(G)$ be a finite simple group, with $X$ and $X^F=G$ as above. Assume $x\in S$ is a semisimple order three element. Then $C_S(x)$ is even order, unless $S= PSL_2(q), PSL_3(q)$ or $PSU_3(q)$. 
\end{corollary}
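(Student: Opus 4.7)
The plan is to combine Lemma 5 with a root-system analysis of which simple types admit a regular semisimple element of order three. By Lemma 5, it suffices to handle the case where $x \in S$ is regular semisimple; we must then show that the only finite simple groups of Lie type $S$ admitting such an element of order three are $PSL_2(q)$, $PSL_3(q)$, and $PSU_3(q)$.

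First I would lift $x$ to an element $\hat x \in G = X^F$ of $3$-power order. Because $X$ is simply connected, centralizers of semisimple elements are connected, so the regularity of $\hat x$ forces $C_X(\hat x) = T$, a maximal torus containing $\hat x$. Writing $\hat x$ as a class $\lambda$ in the cocharacter lattice $Y = X_*(T)$ modulo $3Y$ (or modulo $9Y$ when the lift has order $9$), regularity becomes the combinatorial condition that $\langle \alpha, \lambda \rangle \not\equiv 0 \pmod{3}$ (resp.\ $\pmod 9$) for every root $\alpha$ of $X$.

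I would then check each irreducible simply-connected root system. For $A_n$ the regularity condition requires $n+1$ cocharacter coordinates to be pairwise distinct mod $3$, which forces $n \leq 2$. For $B_n$ and $C_n$ with $n \geq 2$, the short roots $\pm e_i$ (or $\pm 2 e_i$) impose $c_i \not\equiv 0 \pmod 3$, and the roots $\pm e_i \pm e_j$ then give a contradiction because the nonzero residues $\{1,2\}$ are closed under negation mod $3$. A similar short argument rules out $D_n$ for $n \geq 3$. The exceptional types can be handled by passing to a convenient Levi subsystem that has already been excluded ($F_4 \supseteq B_4$, $E_6 \supseteq A_5$, $E_7 \supseteq A_7$, $E_8 \supseteq A_8$), and $G_2$ by direct inspection showing that all four lines of $(\mathbb{Z}/3)^2$ arise as root kernels. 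The same conclusions transfer to the twisted forms $\,^2A_n$, $\,^2D_n$, $\,^3D_4$, $\,^2E_6$, $\,^2F_4$, since regularity depends only on the underlying root system; Suzuki groups $\,^2B_2$ contain no order-three elements, and Ree groups $\,^2G_2$ lie in characteristic $3$, outside the present hypothesis.

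The main technical obstacle, I expect, will be the case where $3 \mid |Z(G)|$ (types $A_2$, $\,^2A_2$, $E_6$, $\,^2E_6$), in which a lift $\hat x$ of an order-three $x \in S$ can have order $9$ in $G$. The $A_2$ and $\,^2A_2$ cases already appear in the exception list; for $E_6$ and $\,^2E_6$, one may either redo the regularity analysis modulo $9$, or pass to the adjoint group so that $x$ itself serves as an order-three element there, and then apply the Levi-restriction argument. Once this is handled, the only regular semisimple order-three elements in $S$ lie in $PSL_2(q)$, $PSL_3(q)$, or $PSU_3(q)$, which together with Lemma 5 establishes the corollary.
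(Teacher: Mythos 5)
Your proposal is correct in substance but reaches the classification by a genuinely different route than the paper. Both arguments open the same way: Lemma 5 reduces the problem to deciding which types admit a regular semisimple element of order three. From there the paper argues concretely with eigenvalues on the natural module --- an order three element of $SL_n$ or $Sp_n$ has at most three distinct eigenvalues, while regular semisimple elements have none repeated, killing $n\geq 4$; a parallel count with the $\pm1$ eigenspaces handles $Spin_n$ for $n\geq 5$; and the exceptional groups are dispatched by citing L\"ubeck's computational data. Your version replaces all of this with the uniform lattice-theoretic criterion: an order three element of $T$ is a class $\lambda\in Y/3Y$, and regularity means $\langle\alpha,\lambda\rangle\not\equiv 0\pmod 3$ for every root, which is then excluded type by type (pairwise distinct residues for $A_n$, the closure of $\{\pm1\}$ under negation mod $3$ for $B_n,C_n,D_n$, Levi restriction $F_4\supseteq B_4$, $E_6\supseteq A_5$, $E_7\supseteq A_7$, $E_8\supseteq A_8$, and direct inspection for $G_2$). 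What your approach buys is uniformity and self-containment --- in particular it proves, rather than cites, the absence of regular semisimple order three elements in the exceptional groups --- at the cost of having to confront the order-$9$ lift issue explicitly, which the paper's eigenvalue count sidesteps (a preimage with $\hat x^3$ central still has at most three distinct eigenvalues). One small correction: the types with $3\mid |Z(X^F)|$ are not only $A_2$, $^2A_2$, $E_6$, $^2E_6$ but all $A_{3k-1}$ and $^2A_{3k-1}$ with the appropriate congruence on $q$; this does not damage the argument, since your proposed fix (pass to the adjoint group, i.e.\ run the same congruence test with $\lambda$ in the coweight lattice) applies verbatim to those cases and the $A_n$ exclusion for $n\geq 3$ is insensitive to which lattice $\lambda$ lives in, but the enumeration should be stated correctly.
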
  
\begin{proof} 
To begin, assume $X= SL_n(k)$ or $Sp_n(k)$, and $V$ is the natural module for $X$. If $x\in S$ is semisimple of order three, then $\hat{x}\in G$ has at most three distinct eigenvalues on $V$. As regular semisimple elements in $X$ have no repeated eigenvalues on the natural module, $X$ contains no regular semisimple order three elements for $n\geq4$.  
\par Next let $X=Spin_n(k)$, $Y=SO_n(k)$, and $\rho:X\rightarrow Y$ be the standard covering map (so that $\rho$ is a double cover in characteristic $p\ne 2$).  Note if $y\in Y$ is not regular, then any preimage $\hat{y}\in X$ will share this property. Let $Y^F=H$ be the fixed points of a Steinberg endomorphism $F:Y\rightarrow Y$, and $R=H'/Z(H')$ be the corresponding finite simple group. A regular semisimple element in $Y$ has no repeated eigenvalues on the natural module other than $\pm 1$, and at most one eigenspace of dimension two corresponding to an eigenvalue $\pm1$. Hence $X$ and $Y$ contain no regular semisimple elements of order three for $n\geq5$. In addition, $Spin_4(k)$ is not a simple algebraic group ($Spin_4(k)/Z(Spin_4(k))\cong PSL_2(k)\times PSL_2(k)$), and $Spin_3(k)\cong Sp_2(k)\cong SL_2(k)$. 
\par Hence if $X$ is a simple classical algebraic group of simply connected type and $x\in S$ is a regular semisimple element of order three, then $X=SL_3(k)$ or $X=SL_2(k)$, and $S= PSL_2(q), PSL_3(q)$ or $PSU_3(q)$. From L\"ubeck \cite{lubeck}, there are no regular semisimple elements of order three in the simply connected exceptional algebraic groups. The conclusion follows by applying Lemma 5.
 \end{proof}
We may now classify the semisimple elements of order three with odd order centralizers.
\begin{theorem} Let $X$ be a simply connected simple algebraic group, $X^F=G$ be the fixed points of a Steinberg endomorphism $F:X\rightarrow X$, and $S=G/Z(G)$ be a finite simple group. Assume $x\in S$ is a semisimple order three element. Then $C_{S}(x)$ is even order, unless $S$ and $x$ occur in the following list.
\begin{enumerate}[(i)]
\item $PSL_3(2^a)$, $a\geq 1$, $x$ in a split torus for $a$ even; $x$ in a partially split torus for $a$ odd
\item $PSU_3(2^a)$, $a\geq 1$, $x$ in a split torus for $a$ odd; $x$ in a partially split torus for $a$ even
\item $PSL_2(2^a)$, $a\geq 1$, $x$ in a split torus for $a$ even; $x$ in an irreducible torus for $a$ odd
\item $PSL_2(q)$, $q\equiv 7$ (mod 12), $x$ in a split torus
\item $PSL_2(q)$ $q\equiv 5$ (mod 12), $x$ in an irreducible torus
\end{enumerate}
 \end{theorem}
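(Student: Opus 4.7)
The plan is to combine Corollary 1 with a direct analysis inside the ambient algebraic groups for each of the three small-rank cases $PSL_2(q)$, $PSL_3(q)$, $PSU_3(q)$ that remain. Corollary 1 tells me that if $x \in S$ is semisimple of order three with $|C_S(x)|$ odd, then $S$ must be one of these three, and a lift $\hat x \in G$ must be regular semisimple. I would first verify this regularity directly: a semisimple order three element of $SL_3$ or $SU_3$ with a repeated eigenvalue $\mu$ forces the third eigenvalue $\nu$ to satisfy $\mu^2\nu=1$ and $\nu^3=1$, hence $\nu=\mu$ and $\hat x$ is scalar, so central; in $SL_2$ the two non-trivial cube roots of unity are automatically distinct. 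Consequently $C_X(\hat x) = T$ for a unique maximal torus $T$, and $C_G(\hat x)=T^F$.

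By Lemma 4, any 2-element of $G$ whose image lies in $C_S(x)$ already centralizes $\hat x$ in $G$. Combined with $Z(G)\subseteq T^F$, this gives $|C_S(x)|_2=|T^F|_2/|Z(G)|_2$, and the task becomes to identify all pairs $(T,\hat x)$ for which this quotient equals $1$. I would parametrize the $G^F$-classes of maximal tori by $F$-conjugacy classes in the Weyl group. For $SL_3(q)$ the three torus orders are $(q-1)^2$, $q^2-1$, and $q^2+q+1$; for $SU_3(q)$ they are $(q+1)^2$, $q^2-1$, and $q^2-q+1$; for $SL_2(q)$ they are $q-1$ and $q+1$. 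For each torus I would write $T^F$ explicitly in terms of eigenvalues and determine when a non-scalar order three element exists.

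The recurring mechanism is that once all primitive cube roots of unity lie in $\mathbb{F}_q$, the Galois-twisted tori contribute only scalar (central) order three elements. After this filtering, exactly one torus per group can host a non-central order three element, and it matches the split or partially split torus named in the statement. With the existence question settled, I would carry out the parity check on $|T^F|_2/|Z(G)|_2$. For $q$ a power of $2$, $|Z(G)|$ is $1$ or $3$, and each relevant $|T^F|$ is odd, so the centralizer is automatically odd and every existing non-central order three element contributes to cases (i)--(iii). For $q$ odd with $S=PSL_3(q)$ or $PSU_3(q)$, each relevant $|T^F|$ is divisible by $4$ while $|Z(G)|_2=1$, so the centralizer is always even and nothing is contributed. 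For $q$ odd with $S=PSL_2(q)$, the odd-centralizer condition becomes $(q\mp 1)/2$ odd, which together with $3\mid q\mp 1$ gives $q\equiv 7\pmod{12}$ (split) and $q\equiv 5\pmod{12}$ (non-split), matching (iv) and (v).

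The most delicate part should be the $SU_3(q)$ analysis, where the twisted Frobenius acts non-trivially on the Weyl group and each torus must be described via its $F$-fixed points in $\bar{\mathbb{F}}_q^{\times}$ with careful attention to which ``split'' torus is intended. Once the explicit parameterizations are in hand, however, the remaining work reduces to a finite eigenvalue check in each case.
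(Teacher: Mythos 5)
Your proposal follows essentially the same route as the paper: reduce via Corollary 1 to regular semisimple order-three elements of $PSL_2(q)$, $PSL_3(q)$, $PSU_3(q)$, note that such an element lies in a unique maximal torus which is its full centralizer modulo the center, and then run through the classes of maximal tori checking which have odd order and actually contain regular order-three elements; your congruence conditions and final list match the paper's. One small caution: your direct re-derivation of regularity assumes the lift satisfies $\hat x^3=1$, which can fail when $|Z(G)|=3$ (a lift may have order nine, giving non-regular, non-central semisimple elements such as $\mathrm{diag}[\zeta,\zeta,\zeta^{-2}]$ with $\zeta$ of order nine and $9\mid q-1$), but this does not harm the argument since Corollary 1 already disposes of all non-regular elements.
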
 
\begin{proof} 
Applying Lemma 5 and Corollary 1, any order three element $x\in S$ will centralize an involution unless $(a)$ $S=PSL_2(q)$, $PSL_3(q)$ or $PSU_3(q)$, and $(b)$ $x$ is regular semisimple. So assume $x$, $S$ satisfy conditions $(a)$ and $(b)$. Since $X$ is a simply connected algebraic group, $\hat{x}$ will live in some maximal torus $Q$, with $C_X(\hat{x})^F=Q$ (see \cite{humphreys}, Section 2.11). Let $T=Q^F/Z(X^F)$ be the corresponding maximal torus in $S$. Then $C_S(x)$ is odd order if and only if $T$ is. Hence to classify semisimple order three elements with odd order centralizers, it suffices to find all odd order tori in $PSL_2(q)$, $PSL_3(q)$ or $PSU_3(q)$ containing regular semisimple elements of order three. 
\par To begin, assume $S= PSL_3(q)$. Up to conjugacy, $S$ has three maximal tori; split, partially split, and irreducible. Call these tori $T_1,T_2$ and $T_3$. 
\par 
A split torus $T_1$ has order $\frac{(q-1)^2}{(3,q-1)}$, and hence is odd order if and only if $q$ is even. Furthermore, $T_1$ contains elements  of order three if and only if $q\equiv 1$ (mod 3). So assume $q\equiv 1$ (mod 3) is even, and $\alpha\in\mathbb{F}^{\times}_q$ has order three. Then some $\hat{x}\in X^F$ has eigenvalues $1,\alpha,\alpha^2$ on the natural module, and hence $T_1$ contains regular semisimple elements of order three. It follows that $S= PSL_3(2^a)$, $a$ even, contains elements of order three centralizing no involution. 
\par 
A partially split torus $T_2$ has order $\frac{q^2-1}{(3,q-1)}$, and so is odd order only if $q$ is even. First assume $q$ is even, and $q\equiv -1$ (mod 3). Let $\alpha$ be an element of order three in $\mathbb{F}_{q^2}^{\times}/\mathbb{F}_{q}^{\times}$. Some $\hat{x}\in X^F$ is diagonalizable in $SL_3(q^2)$ with eigenvalues $1,\alpha,\alpha^2$ on the natural module. In this case, $T_2$ contains regular semisimple elements of order three. On the other hand, when $q\equiv 1$ (mod 3), $T_2$ contains no regular elements of order three. So $S= PSL_3(2^a)$, $a$ odd, contains elements of order three centralizing no involution. 
\par 
An irreducible torus $T_3$ has order $\frac{q^2+q+1}{(3,q-1)}$. If $q\equiv -1$ (mod 3), then $3\nmid q^2+q+1$. If $q\equiv 1$ (mod 3), then $3\mid q^2+q+1$, but $9\nmid \frac{q^2+q+1}{(3,q-1)}$. So $T_3$ contains no elements of order three. 
\par
Next consider $S= PSU_3(q)$. The three maximal tori in $S$ have orders  $\frac{(q+1)^2}{(3,q+1)}$, $\frac{q^2-1}{(3,q+1)}$, and $\frac{q^2-q+1}{(3,q+1)}$. Repeating the arguments given above yields that $C_S(x)$ is even order, unless $S= PSU_3(2^a)$. When $a$ is odd, there are elements of order three in a split torus, and when $a$ is even, there are elements of order three in a partially split torus. 
 \par
 Finally, consider $S= PSL_2(q)$. $S$ contains two types of tori, split and irreducible, of orders $\frac{q-1}{(2,q-1)}$ and $\frac{q+1}{(2,q-1)}$. First assume $q$ is even. Then both tori have odd order and there are regular semisimple elements of order three in a split torus when $q\equiv 1$ (mod 3), and regular semisimple elements of order three in an irreducible torus when $q\equiv -1$ (mod 3). So $PSL_2(2^a)$, $a\geq 1$, yields additional classes of exceptions.  Next assume $q$ is odd. There are regular semisimple elements of order three in a split torus when $q\equiv 7$ (mod 12), and in an irreducible torus when $q\equiv 5$ (mod 12). Furthermore in both cases these tori have odd order. This yields two final classes of exceptions. \end{proof}

\subsection{Unipotent Classes}
Now let $X$ be a simply-connected simple algebraic group in characteristic $p=3$, with $X^F=G$ and $S=G/Z(G)$ as above. In this situation $Z(G)$ has order prime to three, and every order three element $x\in S$ can be lifted to a unipotent element of order three $\hat{x}\in G$. Hence by Lemma 4 to show every order three element in $S$ centralizes an involution, it suffices to show all order three elements in $G$ commute with a non-central 2-power element. 
\par 
For $X=Spin_n(k)$ it will be convenient to switch from the simply-connected algebraic group to the isogeny type $X=SO_n(k)$. Letting $X^F=G$ be as above, we have that $G/Z(G)$ is not simple, but $S=G'/Z(G')$ is. In this situation, to show every order three element $x\in S$ centralizes an involution, it suffices to show all order three elements in $G$ commute with a non-central 2-power element of spinor norm 1.
 \par 
Assume $x\in G$ is unipotent of order three. We show $C_G(x)$ contains a 2-power element with the desired properties, except in a few small rank cases. For the classical groups, the relevant criterion is whether $x$ has a repeated Jordan block. The following theorem will be used on several occasions.
\begin{theorem} Assume $X$= $GL_n(k), \ Sp_n(k)$, or $O_n(k)$ with $k$ an algebraically closed field of characteristic $p\ne 2$. Assume $x\in X$ is a unipotent element with Jordan form $\oplus_i J_i^{r_i}$. 
\begin{itemize} 
\item[$(i)$] If $X=Sp_n$, then $r_i$ is even for each odd $i$; and if $X= O_n$ then $r_i$ is even for each even $i$. 
\item[$(ii)$] Let $C_X(x)= UR$, with $U$ the unipotent radical $R_u(C_X(x))$, and $R$ the reductive part of the group. Then,
\begin{itemize}
\item[] $R= \prod GL_{r_i}$, if $X=GL_n$.
\item[] $R=\prod_{i \ odd} Sp_{r_i}\times \prod_{i\ even}O_{r_i}$, if $X=Sp_n$.
\item[] $R=\prod_{i \ odd} O_{r_i}\times \prod_{i\ even} Sp_{r_i}$, if $X=O_n$.
\end{itemize}
\end{itemize}
\end{theorem}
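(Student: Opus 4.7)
The approach is to analyze the action of $x$ on the natural module $V$ together with any defining bilinear form $B$ simultaneously. Setting $N = x - 1$, the $x$-invariance of $B$ translates (in characteristic $\ne 2$) into $N$ being skew-adjoint with respect to $B$. The goal is to decompose $V$ as a $k[x]$-module in a way compatible with $B$, and read off both the parity constraints in (i) and the reductive structure in (ii) from this decomposition.

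\textbf{Decomposition and parity conditions.} Decompose $V$ into isotypic components as a $k[x]$-module, writing $V = \bigoplus_i V(i)$ with $V(i) \cong V_i \otimes W_i$ the sum of the $r_i$ Jordan blocks of size $i$, where $V_i$ denotes a fixed copy of $J_i$ and $W_i$ is the $r_i$-dimensional multiplicity space. A direct calculation on a Jordan basis shows that $V_i$ admits, up to scalar, a unique $x$-invariant nondegenerate bilinear form $b_i$, and $b_i$ is symmetric when $i$ is odd and alternating when $i$ is even. The main technical step, proved by induction on the longest block size, is that the isotypic decomposition can be chosen to be $B$-orthogonal with $B|_{V(i)} = b_i \otimes c_i$ for unique nondegenerate forms $c_i$ on $W_i$. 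Part (i) then follows from the rule that a tensor product of two bilinear forms is alternating if and only if exactly one factor is: if $B$ is alternating ($X = Sp_n$), then $c_i$ is alternating for $i$ odd (forcing $r_i$ even) and symmetric for $i$ even; the $X = O_n$ case is dual. This is the most delicate step, since the isotypic decomposition of $V$ is not canonical (smaller Jordan blocks can be absorbed into larger ones); the hypothesis $\mathrm{char}\, k \ne 2$ is essential here, allowing coupling terms between different $V(i)$ to be removed by a basis change.

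\textbf{Centralizer structure.} For part (ii), $C_{GL(V)}(x)$ acts on the isotypic decomposition, and its reductive quotient modulo the unipotent radical (which consists of endomorphisms shifting Loewy layers) is $\prod_i GL(W_i) = \prod_i GL_{r_i}$; this yields the $GL_n$ case directly. For $X = Sp_n$ or $O_n$, the centralizer $C_X(x)$ is the stabilizer of $B$ inside $C_{GL(V)}(x)$, and via the identification $B|_{V(i)} = b_i \otimes c_i$, preserving $B$ amounts to preserving each $c_i$ on $W_i$. The reductive quotient of $C_X(x)$ is therefore $\prod_i \mathrm{Isom}(W_i, c_i)$, which by the type computation yields $\prod_{i\text{ odd}} Sp_{r_i} \times \prod_{i\text{ even}} O_{r_i}$ for $X = Sp_n$, and $\prod_{i\text{ odd}} O_{r_i} \times \prod_{i\text{ even}} Sp_{r_i}$ for $X = O_n$, completing the proof.
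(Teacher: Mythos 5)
Your proposal is essentially correct, but note that the paper does not prove this statement at all: its ``proof'' is the single line ``See Theorem 3.1 from \cite{liebeck},'' i.e.\ a citation to Liebeck--Seitz. What you have written is a sketch of the standard proof of that cited result (the one found in Liebeck--Seitz Chapter 3 and in Springer--Steinberg), and its architecture is sound: the unique invariant form $b_i$ on a single block $J_i$, symmetric for $i$ odd and alternating for $i$ even; the $B$-orthogonal refinement of the isotypic decomposition with $B|_{V(i)} = b_i \otimes c_i$; the parity rule for tensor products of forms giving (i); and the identification of the reductive quotient of $C_X(x)$ with $\prod_i \mathrm{Isom}(W_i, c_i)$ giving (ii). Two caveats. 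First, your opening claim that $N = x-1$ is skew-adjoint is not literally true: since the adjoint of the isometry $x$ is $x^{-1}$, one gets $N^{*} = -Nx^{-1}$, which agrees with $-N$ only up to higher-order terms; either work with $x$ directly, or pass to a genuinely skew-adjoint nilpotent via a Cayley transform or Springer map (this is where $p \ne 2$, and for the group-level statement the comparison with the Lie algebra, really enters). Second, the orthogonal isotypic decomposition is, as you say yourself, the delicate step, and you assert it with only a one-clause indication of the induction; for a self-contained proof that step needs to be carried out, since it is exactly where a careless argument in characteristic $2$ would break down. As a justification for using the theorem in this paper, the citation the author gives is the appropriate move; your sketch is a reasonable account of what lies behind it.
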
 
\begin{proof}
See Theorem 3.1 from \cite{liebeck}.
\end{proof}
\begin{lemma} Assume $X=SL_n(k), Sp_n(k)$, or $SO_n(k)$ is a simple algebraic group, and $x\in X$ is a unipotent order three element with a repeated Jordan block. Then $C_X(x)$ contains a simple algebraic subgroup, unless $X= Sp_4(k)$, $SO_7(k)$, or $SO_8(k)$. 
\end{lemma}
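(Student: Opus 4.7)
The plan is to apply Theorem~4 directly and analyze, case by case on the type of $X$, whether the reductive part $R$ of $C_X(x)$ contains a simple algebraic subgroup; this suffices because any simple algebraic subgroup of the connected group $C_X(x)$ is reductive and therefore embeds (up to conjugacy) into the Levi complement $R$. Since $x$ is unipotent of order three in characteristic three, $(x-I)^3=0$, so every Jordan block of $x$ on the natural module has size at most three. I would write the Jordan form of $x$ as $J_1^{r_1}J_2^{r_2}J_3^{r_3}$, so the repeated-block hypothesis is that some $r_i\geq 2$.

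For $X=SL_n(k)$, Theorem~4 gives $R^\circ \supset SL_{r_1}\times SL_{r_2}\times SL_{r_3}$, and any $r_i\geq 2$ already supplies a simple algebraic subgroup $SL_{r_i}$; so this type contributes no exception. For $X=Sp_n(k)$, Theorem~4 gives $R=Sp_{r_1}\times O_{r_2}\times Sp_{r_3}$ with $r_1,r_3$ even. If $r_1\geq 2$ or $r_3\geq 2$, the corresponding $Sp$ factor is simple; if $r_2\geq 3$, then $SO_{r_2}$ contains a simple algebraic subgroup ($SO_3\cong PGL_2$ and $SO_m$ for $m\geq 5$ are simple, while $SO_4$ contains $SL_2$). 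The only residual configuration consistent with the parity constraint and the repeated-block hypothesis is $r_1=r_3=0$, $r_2=2$, forcing $n=4$, $x=J_2^2$, and $R^\circ=SO_2$ a one-dimensional torus; this yields the $Sp_4$ exception.

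For $X=SO_n(k)$, Theorem~4 gives $R^\circ = SO_{r_1}\times Sp_{r_2}\times SO_{r_3}$ with $r_2$ even, and the same style of analysis applies: $r_2\geq 2$ supplies a simple $Sp_{r_2}$, while $r_1\geq 3$ or $r_3\geq 3$ produces a simple subgroup inside the corresponding $SO$ factor (by the same low-rank enumeration as above). The evasive configurations then have $r_2=0$ and $r_1,r_3\in\{0,1,2\}$ with at least one of them equal to $2$. I would enumerate these finitely many low-dimensional possibilities and discard those already covered by the exceptional isogenies $SO_5\cong Sp_4$ and $SO_6\cong SL_4$. What remains is exactly $r_1=1, r_3=2$ in $SO_7$, with $R^\circ=SO_2$, and $r_1=r_3=2$ in $SO_8$, with $R^\circ=SO_2\times SO_2$, neither containing any simple algebraic subgroup. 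These give the $SO_7$ and $SO_8$ exceptions.

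The main obstacle is not conceptual but is the small-rank bookkeeping in the $SO_n$ case: one has to handle the isogeny overlap $SO_5\cong Sp_4$ and $SO_6\cong SL_4$ carefully, so that the exception list matches the claim without over- or under-counting, and so that the parity condition on $r_i$ in Theorem~4 is respected throughout the enumeration.
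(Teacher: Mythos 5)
Your proposal is correct and follows essentially the same route as the paper: invoke the Liebeck--Seitz description of $C_X(x)=UR$ (the paper's Theorem~5, which you call Theorem~4), observe that a simple algebraic subgroup must embed in the reductive part $R$, and enumerate the low-rank configurations of $(r_1,r_2,r_3)$ compatible with the parity constraints, discarding $SO_5$ and $SO_6$ via the exceptional isogenies. If anything you are slightly more explicit than the paper, which handles the $n\leq 6$ orthogonal cases by a brief appeal to $SO_2$ not being simple and $Spin_6\cong SL_4$ without spelling out the $SO_5$ configuration $J_3\oplus J_1^2$.
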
 
\begin{proof} 
Applying Theorem 5, if $X=SL_n(k)$ and $x\in X$ has a repeated Jordan block, then $C_X(x)$ contains a copy of $SL_2(k)$.
\par
For $X=Sp_{n}(k)$, odd sized blocks come in pairs, and any odd pair ensures the existence of $Sp_2(k)\cong SL_2(k)$ in $C_X(x)$. So we may assume $x\in X$ contains only even sized Jordan blocks. Since $x$ has order three, this further implies that $x$ contains only Jordan blocks of size two. If $x$ contains three blocks of size two, then $C_X(x)$ contains a copy of $O_3(k)\cong PGL_2(k)$. The only remaining case is $X= Sp_4(k)$, $x=J_2\oplus J_2$, where $C_X(x)/R_u(C_X(x))\cong O_2(k)$ is solvable.
\par
If $x\in SO_{n}(k)$ has a repeated Jordan block of size two, or three repeated blocks of size one or three, then $C_X(x)$ contains a copy of $SL_2(k)$. As even blocks come in pairs, we may assume that only odd blocks occur. Since $SO_2(k)$ is not simple, and $Spin_6(k)\cong SL_4(k)$, the only remaining cases to check are $X= SO_7(k)$, $x= J_3\oplus J_3\oplus J_1$, and $X= SO_8(k)$, $x=J_3\oplus J_3\oplus J_1\oplus J_1$. In these situations, $C_X(x)/R_u(C_X(x))$ is solvable. 
\end{proof} 

\begin{lemma} 
Let $X=SL_n(k), Sp_n(k)$, or $SO_n(k)$ be a simple algebraic group, with $X^F=G$ as above. Assume $x\in G$ is unipotent of order three. If $x$ contains a repeated Jordan block, then $x$ commutes with a non-central 2-power element in $G$. Furthermore, if $X=SO_n(k)$ we may assume this non-central 2-power element has spinor norm 1. 
\end{lemma}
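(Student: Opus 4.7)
The strategy is to produce the desired 2-power element inside an appropriate $F$-stable reductive subgroup of $C_X(x)$ using the simple subgroup provided by Lemma 6. Since $x\in G = X^F$ is $F$-fixed, $C_X(x)$ is $F$-stable, as are its unipotent radical $R_u(C_X(x))$ and the reductive Levi complement $R$. In the generic case handled by Lemma 6, $R$ contains a non-abelian simple algebraic subgroup; although $F$ may permute its simple factors, the product over a single $F$-orbit is a connected reductive $F$-stable subgroup $\tilde Y \subseteq C_X(x)$. Its fixed points $\tilde Y^F$ form a non-trivial finite group of Lie type in characteristic 3, which necessarily contains non-central 2-power elements (e.g.\ an involution in a maximal torus, whose order is divisible by $q\pm 1$ and hence even since $q = 3^a$). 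Any such element commutes with $x$ and is non-central in $G$ because $\tilde Y$ is a non-scalar subgroup of the ambient classical group $X$.

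For the spinor norm assertion when $X = SO_n(k)$, the simple algebraic subgroup supplied by Lemma 6 is isomorphic to $SL_2(k)$. Because $SL_2(k)$ is simply connected, the embedding $SL_2(k) \hookrightarrow SO_n(k)$ lifts uniquely through the double cover $Spin_n(k) \to SO_n(k)$. Consequently every element of this subgroup, including the chosen 2-element, lies in the image of $Spin_n(k) \to SO_n(k)$, which is precisely the kernel of the spinor norm, so the element has spinor norm 1 as required.

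It remains to address the three exceptional cases $(X,x) \in \{(Sp_4(k), J_2^{\oplus 2}),\, (SO_7(k), J_3^{\oplus 2}\oplus J_1),\, (SO_8(k), J_3^{\oplus 2}\oplus J_1^{\oplus 2})\}$, for which Lemma 6 does not produce a simple subgroup of $C_X(x)$. In each, Theorem 5 shows that $R$ contains an $O_2(k)$ factor; since $q = 3^a$ is odd, the finite group $O_2^{\pm}(q)$ is a non-trivial dihedral group, and lifting a reflection through the Levi decomposition and taking its 2-part supplies a 2-power element in $C_G(x)$, which is non-central by inspection of its action on the natural module. The main obstacle is verifying the spinor norm 1 condition in the $SO_7$ and $SO_8$ subcases, where the simply-connectedness argument is unavailable: one must directly compute the spinor norm of the candidate involutions coming from the $O_2^{\pm}(q)$ factor. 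If a single reflection has the wrong spinor norm, pairing it with another reflection in the same $O_2$ factor (so that the product lies in $SO_2^\pm(q)$) or exploiting an involution in the torus $SO_2^\pm(q)$ should yield a 2-element with spinor norm 1, again using that $q$ is odd; this exceptional-case bookkeeping is the most delicate part of the argument.
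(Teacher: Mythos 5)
Your treatment of the generic case is essentially the paper's argument: take the simple algebraic subgroup $Y\subseteq C_X(x)$ supplied by Lemma 6 and extract a $2$-power element from its finite fixed points. Your extra care about $F$-stability (passing to the product over an $F$-orbit of simple factors) is a reasonable refinement of what the paper leaves implicit. However, the paper concludes non-centrality, and spinor norm $1$ in the orthogonal case, by a pure counting argument: $|Y^F|$ is divisible by $8$ in characteristic $3$, while $|G|/|S|\leq 4$, so a Sylow $2$-subgroup of $Y^F$ cannot lie entirely in the kernel of $G\to S$. Your alternative route to the spinor norm condition, lifting $SL_2(k)\hookrightarrow SO_n(k)$ through $Spin_n(k)$ by simple-connectedness, has a gap at the level of finite groups: the covering $Spin_n\to SO_n$ has disconnected kernel $\mu_2$, so even when $\tilde Y$ lifts, $\tilde Y^F$ need not be contained in the image of $Spin_n(q)$, i.e.\ in $\Omega_n(q)$ (Lang's theorem only gives this when the relevant kernel is connected, or when $\mu_2\cap\tilde Y'=1$). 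Moreover, in the case of three repeated odd blocks the subgroup produced by Theorem 5 is really $SO_3(k)\cong PGL_2(k)$ rather than a genuine $SL_2(k)$, so the simply-connectedness hypothesis is not automatic. The counting argument sidesteps all of this.

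The genuine gap is in the three exceptional cases $Sp_4(k)$ with $J_2\oplus J_2$, $SO_7(k)$ with $J_3\oplus J_3\oplus J_1$, and $SO_8(k)$ with $J_3\oplus J_3\oplus J_1\oplus J_1$, which you explicitly leave unfinished (``should yield,'' ``the most delicate part''). The difficulties you anticipate are real: a reflection in the $O_2$ factor attached to the pair of $J_3$ blocks has determinant $(-1)^3=-1$ on the natural module and so does not even lie in $SO_7(q)$ without being corrected by the $O_1$ factor, and its spinor norm then still has to be computed. The paper avoids this bookkeeping entirely by citing the explicit finite centralizer orders from Tables 8.1a, 8.4a, and 8.5a of Liebeck--Seitz: for instance the $SO_7(q)$ classes of $J_3\oplus J_3\oplus J_1$ have centralizer order $2q^6(q\pm1)$, whose $2$-part is at least $4$, while $\Omega_7(q)$ has index $2$ in $SO_7(q)$; hence a Sylow $2$-subgroup of $C_{SO_7(q)}(x)$ meets $\Omega_7(q)$ nontrivially, producing the required non-central $2$-power element of spinor norm $1$. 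The $Sp_4$ and $SO_8^{\pm}$ cases are dispatched identically. To complete your proof you would either need to carry out the reflection and spinor-norm computations you defer, or replace that portion of the argument with this order-counting step.
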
 
\begin{proof} 
Assume $x\in G$ has a repeated Jordan block, and $C_{X}(x)$ contains a simple algebraic subgroup $Y$. In characteristic $p=3$, for any such $Y\subset C_X(x)$ the order of $Y^F\subset C_G(x)$ is divisible by eight. Since $|G|/|S|\leq 4$, $C_G(x)$ must contain a non-central 2-power element (of spinor norm 1, if $X=SO_n(k)$). It then follows from Lemma 6 that the only cases requiring attention are: $X=Sp_4(k)$, $x=J_2\oplus J_2$; $X=SO_7(k)$, $x= J_3\oplus J_3\oplus J_1$; and $X=SO_8(k)$, $x=J_3\oplus J_3\oplus J_1\oplus J_1$.
\par Let $X= Sp_{4}(k)$ and $x= J_2\oplus J_2$. Table 8.1a of \cite{liebeck} lists the orders of centralizers of unipotent classes in $G=Sp_4(q)$. For $J_2\oplus J_2$ there are two classes, with centralizer orders $2q^3(q\pm1)$. Since $Z(G)$ has order two, and four divides the order of $C_G(x)$, $x$ commutes with a non-central 2-power element. 
 \par
Next assume $X=SO_7(k)$, $x= J_3\oplus J_3\oplus J_1$. Table 8.4a of \cite{liebeck} says the $SO_7(q)$ classes of $x$ have centralizer orders $2q^6(q\pm 1)$. Since the centralizer order contains a factor of four, and $S$ has index two in $SO_7(q)$, $x$ commutes with a non-central 2-power element of spinor norm 1. Finally, if $X= SO_8(k)$ and $x= J_3\oplus J_3\oplus J_1\oplus J_1$, Table 8.5a of \cite{liebeck} tells us the two $SO^{+}_8(q)$ classes of $x$ have centralizers orders $2q^8(q\pm1)^2$, and the two $SO^{-}_8(q)$ classes of $x$ have centralizer orders $2q^8(q^2-1)$. Again since eight divides the order of the centralizer and $|SO^{\pm}_8(q)|/|S|\leq 4$, $x$ commutes with a non-central 2-power element of spinor norm 1.
\end{proof} 

We are now in a position to determine the unipotent elements of order three with odd order centralizers. 

\begin{theorem} Assume $S$ is a finite simple group of Lie type in characteristic $p=3$, and $x\in S$ has order three. Then $C_{S}(x)$ is even order, unless $S$ and $x$ occur in the following list.  
\begin{enumerate}[(i)]
\item $PSL_2(3^a)$, $a\geq 1$, $\hat{x}$ has Jordan form $J_2$
\item $PSL_3(3^a)$, $a\geq 1$, $\hat{x}$ has Jordan form $J_3$
\item $PSU_3(3^a)$, $a\geq 1$, $\hat{x}$ has  Jordan form $J_3$
\item $PSU_4(3^a)$, $a\geq 1$, $\hat{x}$ has Jordan form $J_3\oplus J_1$
\item $PSL_4(3^a)$, $a$ odd, $\hat{x}$ has Jordan form $J_3\oplus J_1$
\item $G_2(q)$, $q=3^{f}$,  $x$ in class $(\tilde{A_1})^3$
\item ${^2G}_2(q^2)$, $q^2=3^{2f+1}$,  $x$ in class $(\tilde{A_1})^3$

\end{enumerate}
\end{theorem}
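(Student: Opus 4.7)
The strategy is to combine Lemma 7 with the observation that in characteristic three every Jordan block of $\hat x$ has size at most three, reducing the classical analysis to a short finite list of small-rank configurations, and then to sweep the exceptional groups using the unipotent class tables of \cite{liebeck}. For $X\in\{SL_n(k),Sp_n(k),SO_n(k)\}$, Lemma 7 disposes of every $\hat x\in G$ whose Jordan decomposition contains a repeated block, so I may assume the Jordan partition of $\hat x$ has distinct parts drawn from $\{1,2,3\}$. The parity conditions of Theorem 5(i) then collapse the $Sp_n$ case onto $Sp_2\cong SL_2$ (odd-sized blocks must occur with even multiplicity) and the $SO_n$ case onto rank at most four (even-sized blocks must occur with even multiplicity); the remaining small-rank orthogonal cases are absorbed into type $A$ via $SO_3\cong PGL_2$ and the isogeny $Spin_4(k)/Z(Spin_4(k))\cong PSL_2(k)\times PSL_2(k)$. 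The surviving candidates therefore lie in $SL_n$ and $SU_n$ with $2\le n\le 6$, with Jordan form from $\{J_2,J_3,J_2\oplus J_1,J_3\oplus J_1,J_3\oplus J_2,J_3\oplus J_2\oplus J_1\}$.

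For each surviving pair $(X,\hat x)$ I would compute $|C_G(\hat x)|=q^{\dim U}\,|R^F|$ using Theorem 5(ii) and then read $|C_S(x)|=|C_G(\hat x)|/|Z(G)|$ (valid here because $|Z(G)|$ is a $2$-power, so by Lemma 4 any $g\in G$ with $[g,\hat x]\in Z(G)$ already centralizes $\hat x$). Whenever the Jordan partition has two distinct nontrivial blocks -- the forms $J_2\oplus J_1$, $J_3\oplus J_2$, and $J_3\oplus J_2\oplus J_1$ -- the reductive part $R$ contains two $GL_1$ factors coming from blocks of different size, so after imposing $\det=1$ and quotienting by $Z(G)$ a full factor $q\mp 1$ (or a quotient of it by a divisor of $(n,q\mp 1)$) survives in $|C_S(x)|$; since $q\mp 1$ is even in characteristic three this factor supplies an involution, and the classes drop out. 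The genuine survivors are the regular unipotent classes $J_2$ in $SL_2$ and $J_3$ in $SL_3,SU_3$, together with the subregular class $J_3\oplus J_1$ in $SL_4$ and $SU_4$; a direct parity count on $|C_S(x)|$ in each of these yields (i)--(v) exactly.

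For $X$ of exceptional type in characteristic three I would walk through the order-three unipotent classes listed in Tables 22.2.1--22.2.7 of \cite{liebeck}, each entry of which records the Bala--Carter label and the reductive part $R$ of $C_X(\hat x)$. Whenever $R$ contains a simple factor of positive rank, $R^F$ supplies a non-central $2$-element of $C_G(\hat x)$ by exactly the argument used in Lemma 5; otherwise one inspects the explicit torus order listed in the table. A sweep through $F_4$, $E_6$, $E_7$, $E_8$ and their twists ${}^3D_4$, ${}^2E_6$, ${}^2F_4$ shows every order-three class admits such an involution, and in the rank-two groups $G_2(q)$ and ${}^2G_2(q^2)$ in characteristic three the only classes whose toral centralizer has odd order are the two classes labelled $(\tilde{A_1})^3$, giving (vi) and (vii).

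\textbf{Main obstacle.} The delicate step is the parity bookkeeping for $J_3\oplus J_1$ in $SL_4(3^a)$ and $SU_4(3^a)$, where $|C_G(\hat x)|=q^4(q\mp 1)$ carries a single factor of two that must be tracked through the quotient by $Z(G)$ of order $(4,q\mp 1)$. The asymmetry between exception (iv) (all $a$) and exception (v) (only $a$ odd) is driven by the differing $2$-adic behaviour of $3^a-1$ and $3^a+1$: by lifting-the-exponent, $v_2(3^a-1)=1$ for $a$ odd but $v_2(3^a-1)\ge 3$ for $a$ even, while $v_2(3^a+1)=2$ for $a$ odd and $v_2(3^a+1)=1$ for $a$ even. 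Keeping this arithmetic straight, together with confirming the small torus orders for the $(\tilde{A_1})^3$ class in $G_2(q)$ and ${}^2G_2(q^2)$, is where the bulk of the computational care is concentrated.
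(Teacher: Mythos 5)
Your proposal is correct and follows essentially the same route as the paper: Lemma 7 eliminates repeated Jordan blocks, Theorem 5 and the exceptional isogenies dispose of the symplectic and orthogonal cases, the linear/unitary cases reduce to the partitions $(2)$, $(3)$, $(3,1)$ after discarding forms with a $J_2$ block alongside others, and the exceptional groups are swept via the Liebeck--Seitz tables. The only cosmetic difference is that the paper exhibits explicit non-central involutions (e.g.\ $\mathrm{diag}[1,\dots,-1,-1,\dots,1]$ for forms containing $J_2$, and $\mathrm{diag}[\alpha,\alpha,\alpha,\alpha^{5}]$ with $\alpha$ of order eight for $J_3\oplus J_1$) where you track $2$-adic valuations of centralizer orders; both arguments land on the same dichotomy between $3^a\mp 1$.
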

\begin{proof}
First assume $X$ is an exceptional algebraic group.  L\"ubeck \cite{lubeck} shows in characteristic $p=3$, all classes of order three elements have Bala-Carter parameters involving root systems of type $A_1$, $\tilde{A_1}$, $A_2$, $\tilde{A_2}$, or $G_2(a_1)$. Tables 22.2.1- 22.2.7 in \cite{liebeck} list the centralizer orders in $X^F$ of classes with such parameters. For $X\not\cong G_2(k)$, all relevant centralizers have even order. For $X^F\cong G_2(q)$ and $X^F= {^2G_2}(q^2)$, $(\tilde{A_1})^3$ is the only class of order three elements with an odd order centralizer.
\par
Next assume $X$ is a simple classical algebraic group $SL_n(k), Sp_n(k),$ or $SO_n(k)$. Let $S$ be the corresponding finite simple group, and assume $x\in S$ has order three. By lifting $x\in S$ to a unipotent element of order three $\hat{x}\in G$ and applying Lemma 7, we may assume $\hat{x}$ contains no repeated Jordan blocks, and hence that $n\leq 6$.  
\par
 First let $X=SL_n(k)$. Note if $S= PSL_n(q)$ or $PSU_n(q)$, $n>2$, and $\hat{x}$ contains a $J_2$ block, then there exists a non-central involution $diag [1,...,-1,-1,...,1]$ in $G$ commuting with $\hat{x}$ (this element occurs in $SU_n(q)$, since $3^{a}+1$ is even). Hence we may further restrict to classes in $SL_n(q)$ and $SU_n(q)$ containing no Jordan blocks of size two. This limits our search to $n\leq 4$. 
 \par
 When $n=2$ or $3$, $\hat{x}$ must have Jordan forms $J_2$ or $J_3$, respectively. In these cases, the reductive part $R$ of $C_G(\hat{x})$ is contained in $Z(G)$. Since the unipotent radical has odd order,  $C_S(x)$ will contain no involutions. Hence $PSL_2(3^a)$, $PSL_3(3^a)$, $PSU_3(3^a)$, $a\geq1$, yield classes of exceptions. 
 \par
When $n=4$, $\hat{x}$ must have Jordan form $J_3\oplus J_1$. In $SL_4(k)$, the reductive part of the centralizer of $\hat{x}$ will consist of diagonal matrices of the form $diag [\alpha,\alpha,\alpha,\beta]$, where $\alpha^3\beta=1$. Hence if $y$ is an involution modulo the center and is contained in $C_G(\hat{x})$, we must have that $y=diag [\alpha,\alpha,\alpha,\beta]$, with $\alpha^2=\beta^2$. Note that $\alpha^3\beta=1$ and $\alpha^2=\beta^2$ imply that $\alpha^8=1$. If $\alpha$ has order one, two, or four, the above conditions further imply $\alpha=\beta$, and $y$ is the identity. Hence $C_S(x)$ will have odd order, unless $\alpha$ has order eight. 
\par When $a$ is odd, $\mathbb{F}_{3^a}^{\times}$ contains no elements of order eight, so $PSL_4(3^a)$, $a$ odd, yields another class of exceptions. When $a$ is even, $\mathbb{F}_{3^a}^{\times}$ does contain an element $\alpha$ of order eight, and $diag[\alpha,\alpha,\alpha,\alpha^5]$ is a non-central involution commuting with $\hat{x}$. 
\par Elements $diag[\alpha,\alpha,\alpha,\beta]$ in $SU_4(q)$ have the additional property that the orders of $\alpha$ and $\beta$ must divide $q+1$.  Since $8\nmid q+1$, this implies that $PSU_4(3^a)$, $a\geq 1$, yields a further class of exceptions. 
\par
Next consider the symplectic groups $X= Sp_{4}(k)$ and $Sp_6(k)$. Theorem $5$ says $J_3\oplus J_1\notin Sp_4(k)$, and $J_3\oplus J_2\oplus J_1\notin Sp_6(k)$. So elements of order three will have repeated Jordan blocks, and hence centralize an involution. Finally, there are exceptional isomorphisms allowing us to handle $X= SO_n(k)$, $n\leq 6$, in terms of the other classical groups. 
\end{proof} 
Applying Lemmas 2 and 3, Theorems 4 and 6, and the Classification of Finite Simple Groups, we have determined the order three elements in the finite simple groups which centralize no involution. To complete the proof of Theorem 3, it suffices to find all order three elements in $S< G \leq \mathrm{Aut}(S)$ with odd order centralizers. 
\section{Outer Automorphisms of Order Three}
\begin{lemma} Let $G= \mathrm{Sym}_n$, with $n\geq 5$. If $x\in G$ is order three, then $C_{G}(x)$ has even order.
\end{lemma}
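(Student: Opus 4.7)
The plan is to use cycle structure directly. Any order three element $x\in\mathrm{Sym}_n$ is a product of $k\geq 1$ disjoint 3-cycles, say $c_1\cdots c_k$, together with $f=n-3k$ fixed points. I would exhibit an explicit involution commuting with $x$, splitting into two cases depending on the number of fixed points.

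\textbf{Case 1: $f\geq 2$.} Pick two distinct fixed points $a,b$ of $x$; then the transposition $(a\ b)$ is an involution which manifestly commutes with $x$ (it fixes the support of every $c_i$ pointwise and is disjoint from $x$ otherwise).

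\textbf{Case 2: $f\leq 1$.} Since $n\geq 5$ forces $3k\geq n-1\geq 4$, we have $k\geq 2$, so $x$ has at least two 3-cycles, say $c_1=(a_1\ a_2\ a_3)$ and $c_2=(b_1\ b_2\ b_3)$. The involution
\[\sigma=(a_1\ b_1)(a_2\ b_2)(a_3\ b_3)\]
conjugates $c_1$ to $c_2$ and $c_2$ to $c_1$, while fixing the supports of $c_3,\dots,c_k$ and any remaining fixed point of $x$ pointwise. Since disjoint cycles commute, $\sigma x\sigma^{-1}=c_2 c_1 c_3\cdots c_k=x$, so $\sigma\in C_G(x)$.

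In either case $C_G(x)$ contains an involution, so its order is even. There is essentially no obstacle here: the argument is a direct case split on cycle structure, and the bound $n\geq 5$ enters only to guarantee that when $x$ has at most one fixed point it must contain at least two 3-cycles, which is needed to run Case 2. The same argument also breaks at $n=4$, where the single 3-cycle $(1\ 2\ 3)$ has odd order centralizer $\langle(1\ 2\ 3)\rangle$ in $\mathrm{Sym}_4$, explaining the hypothesis $n\geq 5$.
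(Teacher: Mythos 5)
Your proof is correct and follows essentially the same route as the paper: the paper's one-line argument is precisely that an order three element with two fixed points or two $3$-cycles centralizes an involution (a transposition in the first case, a product of three transpositions swapping the two $3$-cycles in the second), and that $n\geq 5$ forces one of these configurations. You have simply written out the explicit involutions and the case check in more detail.
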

\begin{proof}
If $x\in G$ has two fixed points or two three cycles, it will centralize an involution in $G$. For $n\geq 5$, all order three elements have this property. 
\end{proof} 
Note for $n\geq 5$, $\mathrm{Sym}_n$ is almost simple, and for $n\ne 6$, the outer automorphism group of $\mathrm{Alt}_n$ is $\mathrm{Sym}_n$. When $n=6$, the automorphism group is slightly bigger, however this case is handled elsewhere by the exceptional isomorphism $\mathrm{Alt_6}\cong PSL_2(9)$.
\begin{lemma} Let $S< G\leq \mathrm{Aut}(S)$ be almost simple, with $S$ a sporadic group. Assume $x\in G$ is order three. Then $C_G(x)$ has even order. 
\end{lemma}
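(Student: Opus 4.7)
The plan is first to observe that for any sporadic simple group $S$, the outer automorphism group $\mathrm{Out}(S)$ has order at most $2$ (a standard fact recorded in the ATLAS \cite{atlas}). Since by hypothesis $S < G \leq \mathrm{Aut}(S)$, the quotient $G/S$ injects into $\mathrm{Out}(S)$ and is nontrivial, hence has order exactly $2$. Consequently, any element $x \in G$ of order $3$ must project trivially to $G/S$; that is, $x$ already lies in $S$.

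Granted that $x \in S$, Lemma $3$ tells us $C_S(x) \leq C_G(x)$ has even order unless $S = J_3$ and $x$ lies in the conjugacy class $3B$. In the generic case the conclusion is immediate, so it remains only to handle the single exceptional situation $S = J_3$, $G = J_3.2$, $x$ in class $3B$.

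For this exception I would exploit the standard relationship between $S$-class sizes and $G$-class sizes under an index-two extension: if the $S$-class $x^S$ is stable under the outer involution, then $|C_G(x)| = [G:S]\cdot|C_S(x)| = 2|C_S(x)|$, which is visibly even; otherwise $x^S$ is paired with a distinct $S$-class of identical centralizer order, and $|C_G(x)| = |C_S(x)|$ would remain odd. Consulting the ATLAS \cite{atlas}, the $J_3$-classes $3A$ and $3B$ have different centralizer orders, so they cannot be swapped by the outer involution of $J_3$; each class is therefore individually $G$-stable, forcing $|C_G(x)| = 2|C_S(x)|$ and hence even.

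The only obstacle is the verification in the $J_3$ case, which amounts to bookkeeping from the ATLAS. Once one notes that $3A$ and $3B$ have distinct centralizer orders in $J_3$, the fusion question is settled with no further work, and no deeper structural argument is needed.
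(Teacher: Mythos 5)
Your proof is correct, but it takes a genuinely different route from the paper. The paper's entire proof is a one-line appeal to the online ATLAS \cite{wilson}: one simply looks up the centralizer orders of order-three elements in each almost simple extension of a sporadic group. You instead give a structural reduction: since $|\mathrm{Out}(S)|\leq 2$ for every sporadic $S$ and $S<G$, you get $[G:S]=2$, so every order-three element of $G$ lies in $S$, and Lemma 3 of the paper disposes of everything except $S=J_3$, $x\in 3B$. Your fusion argument for that case is also sound: in an index-two extension, either $x^G=x^S$ and $|C_G(x)|=2|C_S(x)|$, or $x^S$ is fused with a distinct class of the same centralizer order and $|C_G(x)|=|C_S(x)|$; since $3A$ and $3B$ are the only order-three classes of $J_3$ and have different centralizer orders ($1080$ versus $243$), class $3B$ cannot be fused away, so $|C_{J_3:2}(x)|=2\cdot 243=486$ is even. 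What your approach buys is self-containedness and a reason why the answer comes out as it does, at the cost of needing two standard ATLAS facts (the bound on $\mathrm{Out}(S)$ and the two centralizer orders in $J_3$) rather than one wholesale table lookup; the paper's approach is shorter but entirely computational. One cosmetic remark: the paper's later argument for Theorem 1 also uses the fact that $C_{J_3:2}(x)$ is even for $x\in 3B$, and your derivation would serve equally well there.
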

\begin{proof} 
From \cite{wilson} we find each order three element in $G$ has an even order centralizer. 
\end{proof}
\par It only remains to check order three outer automorphisms in the groups of Lie type. For this we must consider: $(i)$ diagonal automorphisms, $(ii)$ field automorphisms, $(iii)$ graph automorphisms, and $(iv)$ field-graph automorphisms. The following lemma shows the latter three cases contain no elements of order three with odd order centralizers.
\begin{lemma} 
 Let $S < G \leq \mathrm{Aut}(S)$ be an almost simple group, with $S$ a simple group of Lie type. Assume $x\in G$ is an order three field, graph, or field-graph automorphism. Then $C_{G}(x)$ has even order.
\end{lemma}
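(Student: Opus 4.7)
The plan is to invoke Steinberg's theorem on fixed-point subgroups of algebraic-group automorphisms to identify $C_S(x)$ in each of the three cases, and to observe that in every case $C_S(x)$ contains a nontrivial finite group of Lie type, hence an involution. Since $C_S(x) \subseteq C_G(x)$, this suffices.

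First, suppose $x$ is a field automorphism of order three. Then the defining field $\mathbb{F}_q$ of $S$ admits a cubic subfield $\mathbb{F}_{q_0}$ with $q = q_0^3$ (with the natural modification for the twisted types ${}^2A_n$, ${}^2D_n$, ${}^2E_6$, ${}^2B_2$, ${}^2G_2$, ${}^2F_4$). On the algebraic group $X$ with $X^F$ the finite group underlying $S$, the automorphism $x$ is induced by a Steinberg endomorphism $F_0$ satisfying $F_0^3 = F$, so that $X^{F_0} \subseteq C_{X^F}(x)$. Passing to $S$, one finds that $C_S(x)$ contains a finite group of the same Dynkin type defined over $\mathbb{F}_{q_0}$, and any such group has even order.

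Order three graph and field-graph automorphisms can only arise from the triality of type $D_4$, so $S = P\Omega_8^+(q)$. For a pure graph (triality) automorphism of order three, Steinberg's theorem identifies the fixed-point subgroup on the algebraic group as a $G_2$, and taking $F$-fixed points gives $C_S(x) \supseteq G_2(q)$, which has even order. For a field-graph automorphism of order three one has $q = q_0^3$ and the corresponding fixed-point subgroup is ${}^3D_4(q_0)$, which also has even order. In each subcase $C_S(x)$ therefore contains an involution, which is simultaneously an involution of $C_G(x)$.

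The main technical point is to verify that the involutions exhibited by the fixed-point subgroup actually survive the passage between $X^F$, $S$, and $G$ and are not absorbed by $Z(X^F)$; in all relevant cases the Steinberg subgroup is large enough relative to the center for this to be routine, and Lemma 4 is available to lift the centralizer question to $X^F$ when needed. The smallest cases (such as the $q = 2$ situation for $D_4$) can in addition be checked directly from the \textsc{Atlas}.
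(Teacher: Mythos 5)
Your overall strategy coincides with the paper's: identify the fixed-point subgroup of each order-three outer automorphism (via Steinberg's theorem / the Gorenstein--Lyons--Solomon tables) and observe that it has even order. The field and field-graph cases are handled adequately; the paper uses the subgroup over the prime field where you use the index-three subfield, but either gives an even-order subgroup of $C_S(x)$, and for field-graph automorphisms there is, up to diagonal automorphisms, a single class with centralizer ${}^3D_4(q_0)$, as you say.

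There is, however, a genuine gap in the graph-automorphism case. You treat only ``a pure graph (triality) automorphism,'' i.e.\ the standard triality with fixed-point group $G_2(q)$. But the lemma concerns an arbitrary order-three element $x\in G$ inducing a graph automorphism on $S=P\Omega_8^+(q)$, that is, an arbitrary order-three element of the coset of the triality over the inner-diagonal subgroup, and such elements are \emph{not} all conjugate to the standard triality. By Theorem 4.7.1 and Proposition 4.9.2 of Gorenstein--Lyons--Solomon (the reference the paper relies on), for $p\neq 3$ there are two classes of order-three graph automorphisms, with fixed-point groups $G_2(q)$ and $A_2(q)$, while for $p=3$ there are two classes, with fixed-point groups $G_2(q)$ and a proper parabolic subgroup of $G_2(q)$. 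Your argument covers only the first class in each characteristic. The conclusion does survive --- $A_2(q)$ and a parabolic of $G_2(q)$ have even order for every $q$ --- but your proof as written does not establish it: you need to enumerate the conjugacy classes of order-three graph automorphisms rather than assume there is only the standard triality.
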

\begin{proof} 
If $S$ is a finite simple group of Lie type over the field $\mathbb{F}_{p^n}$, then the field automorphism group of $S$ is cyclic of order $n$. Hence there is an almost simple group $G=\langle S,x\rangle$ containing an order three field automorphism $x$ whenever three divides $n$. However, field automorphisms fix the prime field, and hence an even order subgroup $Y\subset S$. So $C_G(x)$  has even order. 
 \par
Order three graph and field-graph automorphisms occur only in $D_4(q)$ (see section 4.7 of \cite{gorenstein2}). In characteristic $p=3$, there are two conjugacy classes of graph automorphisms in $D_4$ (see Proposition 4.9.2 in \cite{gorenstein2}). The first has fixed point group $G_2(q)$, and the second has a parabolic subgroup of $G_2(q)$ as its fixed point group. Hence both are even order for all $q$. In characteristic $p\ne 3$, the two conjugacy classes of graph automorphisms have fixed point groups $G_2(q)$ and $A_2(q)$ (see Theorem 4.7.1 in \cite{gorenstein2}). These groups both contain involutions for all $q$. Similarly, up to diagonal automorphism, there is a single class of order three graph-field automorphisms in $D_4(q)$. The centralizer of an order three graph-field automorphism is $^3 D_4(q)$, which contains involutions for all $q$.
\end{proof} 
Finally, consider the order three diagonal automorphisms.
\begin{lemma} Let $S < G \leq \mathrm{Aut}(S)$ be an almost simple group, with $x\in G$ an order three diagonal automorphism, and $G=\langle S,x\rangle$. Then $C_G(x)$ is even order, unless $G$ and $x$ occur on the following list.
 \begin{enumerate}[(i)]
 \item $PGL_3(q)$, $q\equiv 1$ (mod 3),  $x$ in an irreducible torus
 \item $PGU_3(q)$, $q\equiv -1$ (mod 3), $x$ in an irreducible torus
 \end{enumerate} 
\end{lemma}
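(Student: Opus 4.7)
I would begin by cataloguing the simple groups $S$ of Lie type admitting order 3 diagonal automorphisms at all. Since $\mathrm{Diag}(S) \cong Z(X^{sc})^F$, where $X^{sc}$ is the simply connected cover of $S$, this requires $3 \mid |Z(X^{sc})^F|$, giving the four families $PSL_n(q)$ with $3 \mid n$ and $q \equiv 1 \pmod{3}$; $PSU_n(q)$ with $3 \mid n$ and $q \equiv -1 \pmod{3}$; $E_6(q)$ with $q \equiv 1 \pmod{3}$; and ${}^2E_6(q)$ with $q \equiv -1 \pmod{3}$. In every case $p \neq 3$, so any order 3 diagonal automorphism $x$ lifts to a semisimple element $\tilde{x}$ of order 3 in the adjoint algebraic group $X^{ad}$.

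The overall strategy parallels that of Theorem 4. If $\tilde{x}$ is not regular semisimple in $X^{ad}$, then $C_{X^{ad}}(\tilde{x})$ is reductive of positive semisimple rank, and the argument of Lemma 5 produces an involution in $C_G(x)$. I therefore reduce to the regular case, where $C_G(x) = T^F$ for some maximal torus $T$ of $X^{ad}$. For type $A_{n-1}$, the relation $\tilde{x}^3 = 1$ permits at most three distinct eigenvalues on the natural module; regularity then forces $n$ distinct eigenvalues, hence $n \leq 3$, leaving only $PGL_3(q)$ and $PGU_3(q)$.

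I would then run through the three maximal tori in $PGL_3(q)$. A regular order 3 element of the split torus has shape $\mathrm{diag}(1,\omega,\omega^2)$ with $\det = 1$, so it is inner rather than diagonal; a similar check shows that when $q \equiv 1 \pmod{3}$, the partially split torus contributes only inner classes (its order 3 elements reduce, up to scalars, to the split case). The irreducible torus, however, has odd order $q^2 + q + 1$ (since $q(q+1)$ is even), and for $q \equiv 1 \pmod{3}$ contains a regular order 3 element $\tilde{x} = \zeta^{(q^2+q+1)/3}$ with $\zeta$ a generator of $\mathbb{F}_{q^3}^\times$. A direct norm computation, using the congruence $(q^2+q+1)/3 \equiv 1 \pmod{3}$, shows $\det \tilde{x} \notin (\mathbb{F}_q^\times)^3$, so $x$ is genuinely diagonal, which gives exception (i). The analogous analysis of the order $q^2 - q + 1$ Coxeter torus of $PGU_3(q)$ for $q \equiv -1 \pmod{3}$ produces exception (ii).

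For $E_6(q)$ and ${}^2E_6(q)$ I would appeal to the classification of maximal tori and semisimple classes in the adjoint group (for instance the tables of Deriziotis or L\"ubeck). The main observation is that the only maximal torus of $E_6^{ad}(q)$ that can have odd order (necessarily for $q$ even) lies above the Coxeter torus of order $\Phi_9(q)$, and this torus is already contained in the inner image $E_6^{sc}(q)/Z$, so its order 3 elements are inner, not diagonal. Every genuinely diagonal order 3 element is therefore non-regular, and its centralizer picks up an even-order factor from its semisimple part; the twisted case is symmetric. The main obstacle in the argument is precisely this $E_6, {}^2E_6$ verification: unlike the type $A$ case it does not reduce to a clean structural argument, but rests on tracking which cosets of $E_6^{sc}(q)/Z$ the various maximal tori of the adjoint group land in.
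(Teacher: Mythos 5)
Your overall strategy is the same as the paper's: catalogue the groups admitting order three diagonal automorphisms ($PGL_{3k}$, $PGU_{3k}$, $E_6(q)_{ad}$, ${}^2E_6(q)_{ad}$), reduce to regular semisimple elements via the centralizer structure, use the eigenvalue count to force $n=3$ in type $A$, and then analyze the tori of $PGL_3(q)$ and $PGU_3(q)$ one by one. Your treatment of $PGL_3(q)$ is fine and in one respect more explicit than the paper's: the verification that the regular order three element of the Coxeter torus is genuinely outer via $\det\tilde{x}\notin(\mathbb{F}_q^\times)^3$, using $9\nmid q^2+q+1$, is a clean way to make that point. (One small inaccuracy: for a regular semisimple element of the adjoint group $C_G(x)$ is not quite $T^F$ --- in $PGL_3(q)$ it is $T^F$ extended by a cyclic group of order three, the paper's $\langle T,w\rangle$ --- but this does not affect parity.)

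The genuine problem is your ``main observation'' for $E_6(q)_{ad}$. The claim that the only maximal torus of $E_6^{ad}(q)$ that can have odd order is the one of order $\Phi_9(q)$ is false: diagonal automorphisms of order three require $q\equiv 1\pmod 3$, which includes even $q$ (e.g.\ $q=4$), and for $q$ even \emph{every} maximal torus of $E_6(q)_{ad}$ has odd order --- the split torus alone has order $(q-1)^6$ --- and the split torus certainly contains outer-diagonal elements of order three. So the proposed reduction ``the unique odd-order torus is inner, hence every diagonal order three element is non-regular, hence its centralizer is even'' collapses at the first step. What actually has to be verified is that no order three semisimple class of $E_6(q)_{ad}$ or ${}^2E_6(q)_{ad}$, inner or outer, has an odd-order centralizer; equivalently (by the Lemma 5 mechanism) that none of them is regular. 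This is a genuine computation, and the paper disposes of it by citing L\"ubeck's data directly: all order three elements of $E_6(q)_{ad}$ and ${}^2E_6(q)_{ad}$ have even-order centralizers. Your fallback appeal to the tables of Deriziotis or L\"ubeck would rescue the argument, but the structural shortcut you offer in its place should be deleted, since it is not merely unproven but wrong.
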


\begin{proof} The almost simple groups containing outer diagonal elements of order three are $E_6(q)_{ad}$, $^2E_6(q)_{ad}$, $PGL_n(q)$ and $PGU_n(q)$, with $n=3k$. From L\"ubeck \cite{lubeck}, all order three elements in $E_6(q)_{ad}$ and $^2E_6(q)_{ad}$ have even order centralizers. 
\par
So assume $G=PGL_{3k}(q)$. Since $x\in G$ is order three, $\hat{x}$ has at most three eigenvalues on the natural module. Hence for $k\geq 2$, $PGL_{3k}(q)$ contains no regular elements of order three. However when $x\in G$ is not regular, $C_G(x)$ contains a copy of $GL_2(q)$, and so in particular is even order. It follows that we may restrict our search to $\langle S,x\rangle=PGL_3(q)$, with $x$ a regular semisimple element of order three. 
\par The outer diagonal elements of order three in $PGL_3(q)$ occur when $q\equiv 1$ (mod 3), and are contained in either a split or irreducible torus. Since $x$ is regular semisimple of order three, $\hat{x}$ has three distinct eigenvalues $1,\alpha,\alpha^2$ on the natural module, with $\alpha^3=1$. In particular, $\hat{x}$ has determinant 1. It follows that if $x$ is contained in a split torus, then $x\in S$ and hence is not an outer-diagonal element.
\par 
 So we may assume $x$ occurs in an irreducible torus $T$. Let $\alpha$ be an element of order three in $\mathbb{F}_q^{\times}$. There is some $\hat{x}\in GL_3(q)$ such that $\hat{x}^3=diag[\alpha,\alpha,\alpha]$, and $\hat{x}$ has three distinct eigenvalues on the natural module (corresponding to the cube roots of $\alpha$). In this case, $x\in T$ is regular semisimple of order three, and $C_G(x)=\langle T,w\rangle$ with $w$ an element of order three. Since $T$ has order $q^2+q+1$, $C_G(x)$ has odd order. It follows that $PGL_3(q)$, $q\equiv 1$ (mod 3), contains an outer-diagonal element of order three centralizing no involution. Repeating this argument for $PGU_{3k}(q)$ yields that $PGU_3(q)$, $q\equiv -1$ (mod 3), contains outer-diagonal elements of order three with odd order centralizers. 
\end{proof} 

\section{Proofs of the Main Theorems}
Taken together, Sections $2-4$ prove Theorem 3. Now recall the statement of Theorem 1.  \begin{customthm}{1}
 Let $G$ be a finite almost simple group, and $x\in G$ be an element of order three. Then 
$x$ normalizes a nontrivial 2-subgroup of $G$, unless $G$ and $x$ occur in the following list.
\begin{enumerate}[(i)]
\item $PSL_2(2^a)$, $a$ odd, $x$ in the unique class of order three elements;
\item $PGL_3(2^a)$, $a$ even, $x$ in an irreducible torus;
\item $PGU_3(2^a)$, $a$ odd, $x$ in an irreducible torus;
\item ${^2G}_2(q^2)$, $q^2=3^{2f+1}$,  $x$ in class $(\tilde{A_1})^3$.
\end{enumerate}
\end{customthm}
To prove the theorem, it suffices to check whether the list of exceptional order three elements found in Table 1 of Theorem 3 normalize some larger 2-group.  
\subsection{Alternating and Sporadic Groups} 
Recall for $n=5,6,7,9,10$, $\mathrm{Alt}_n$ contains order three elements which centralize no involution. However by an easy inspection all such elements normalize a four group in $\mathrm{Alt}_n$. \par
Next assume $S=J_3$, and $x$ is in the conjugacy class $3B$. Note that $\mathrm{Aut}(S)\cong J_3: 2$. By \cite{wilson}, $C_{J_3:2}(x)$ has even order, and there is a single class $2A$ of outer involutions in $J_3:2$. Furthermore for $w\in 2A$, $C_{J3:2}(w)$ has order 4896.  
\par Using the list of maximal subgroups of $J_3:2$ provided in \cite{atlas}, we find that $PSL_2(17)\times 2$ has order 4896, and that it is the unique maximal subgroup of $J_3:2$ with order divisible by 4896. Hence $C_{J_3:2}(w)\cong PSL_2(17)\times 2$. 
It follows that $x$ is contained in a conjugate of $PSL_2(17)$. But $PSL_2(17)$ has a single conjugacy class of order three elements, and has $\mathrm{Alt_4}$ as a subgroup, so $x$ will normalize a four group in $J_3$. 
\subsection{Groups of Lie Type} 
To begin, let $G= PSL_2(2^a)\cong SL_2(2^a)$. A parabolic subgroup $Q$ of $G$ is a Borel subgroup, and hence has order $2^a(2^a-1)$. Furthermore, the unipotent radical $U$ of $Q$ is a Sylow-2 subgroup of $G$. Applying Borel-Tits (see Theorem 26.5 in \cite{malle}), the normalizer of any 2-group in $SL_2(2^a)$ is contained in a conjugate of $Q$. Since $3\mid 2^a(2^a-1)$ if and only if $a$ is even,  $PSL_2(2^a)$, $a$ odd, contains elements of order three normalizing no nontrivial 2-subgroup. 
\par
For $PSL_2(q)$, $q$ odd, we may assume there is a single class of order three elements. When $p\ne 3$, there is a single class, and when $p=3$ there are two classes, but they are conjugate by an outer automorphism, which preserves the property of normalizing a 2-group. Dickson's Theorem tells us $PSL_2(p)\subset PSL_2(q)$ contains $\mathrm{Alt_4}$ as a subgroup for all odd $p$, so the normalizer of a four group will contain an element from the class of order three. 
\par
For $G= PSL_3(2^a)$ and $PSU_3(2^a)$, the split and partially split tori will be contained in some proper parabolic subgroup $Q$ of $G$. The unipotent radical $U$ of the relevant parabolic is a non-trivial 2-group, with $N_G(U)=Q$. Hence all elements of order three in these tori will normalize a 2-group.
\par Next assume $G= PGL_3(q)$, $q\equiv 1$ (mod 3), with $x$ an element of order three in an irreducible torus. 
First assume $q$ is even. Then $\hat{x}\in GL_3(q)$ acts irreducibly on three dimensional space, and hence lives inside no parabolic subgroup. Since the normalizer of any 2-group in $GL_3(q)$ is contained in some parabolic, $\hat{x}$ normalizes no nontrivial 2-group in $GL_3(q)$. It follows that $x$ normalizes no nontrivial 2-subgroup in $PGL_3(2^a)$, $a$ even. The same argument shows an element of order three in an irreducible torus of $PGU_3(2^a)$, $a$ odd, will normalize no nontrivial $2$-group.
 \par 
Now assume $q$ is odd. The order of $GL_3(q)$ is $q^3(q-1)^3(q^2+q+1)(q+1)$, and the order of a split torus $T$ in $GL_3(q)$ is $(q-1)^3$. Note $3\mid q^2+q+1$, but $9\nmid q^2+q+1$. It follows that
 $N_{GL_3(q)}(T)=T\rtimes \ \mathrm{Sym_3}$ contains a Sylow-3 subgroup of $GL_3(q)$. In particular, if $K$ is a maximal elementary 2-subgroup of $T$, then $N_{GL_3(q)}(K)$ contains both $T$ and an additional order three element permuting elements in $K$. So $N_{GL_3(q)}(K)$ contains a Sylow-3 subgroup of $GL_3(q)$. By viewing the images of $K$ and $T$ in $G$, it follows that all order three elements in $PGL_3(q)$ will normalize a 2-subgroup. The same type of argument applies to elements of order three in $PGU_3(q)$, $q$ odd, $q\equiv -1$ (mod 3). 
 \par
Next let $G= PSL_3(3^a)$, and $\hat{x}=J_3$. Again, a split torus of $SL_3(3^a)$ contains an elementary abelian 2-group $K$, with $y=\left[\begin{array}{ccc}
    0 & 0 & 1         \\
    1 & 0 & 0 \\
    0 & 1 & 0
    \end{array}\right]
\in N_{SL_3(3^a)}(K)$. Since $y$ has Jordan form $J_3$,  $x$ will normalize a 2-group in $G$. Restricting to the subgroup $SL_3(3^a)\subset PSL_4(3^a)$ occurring on the $J_3$ block, the same argument can be used to show that $x$ normalizes a 2-group of $PSL_4(3^a)$, when $\hat{x}=J_3\oplus J_1$. This argument can also be repeated to show that $x$ normalizes a nontrivial 2-group in $PSU_3(3^a)$ and $PSU_4(3^a)$ in the relevant cases.  

\par Now consider $G=G_2(q)$, $q=3^f$, with $x$ in the class $(\tilde{A_1})^3$. There is a subgroup of type $A_1A_1$ in $G$ (which is the centralizer of an involution). One of the factors is generated by two short root subgroups and the other by two long root subgroups (see \cite{malle}, \S 13). The order three element $x\in (\tilde{A_1})^3$ is a diagonal product of a long and short root element. Each $A_1$ contains a copy of $\mathrm{Alt_4}$, so each long and short root element will individually normalize a four group $K$, and hence $x$ will normalize $K\times K$ in $\mathrm{Alt_4}\times\mathrm{Alt_4}$.
\par Finally consider $G={^2}G_2(q^2)$. $G$ has order $q^6(q^6+1)(q^2-1)$, $q^2=3^{2f+1}$, so its Sylow-2 subgroups have order 8. One of its Sylow-2 subgroups $P$ will be contained in ${^2}G_2(3)\cong P\Gamma L_2(8)$. Note that up to conjugacy, $P\Gamma L_2(8)$ contains a single class of involutions, and a single four group $K$. It follows that $^2G_2(q^2)$ will share this property. 
\par Let $y$ be an element from the class of involutions in $P\Gamma L_2(8)$. By Theorem 3, $N_{P\Gamma L_2(8)}(\langle y\rangle)$ has order prime to three. Furthermore, $N_{P\Gamma L_2(8)}(K)$ has order $2^3\cdot 3$, and $N_{P\Gamma L_2(8)}(P)$ has order $2^3\cdot 3\cdot 7$. So, up to conjugacy, there is a single class of order three elements in the normalizer of any 2-group. However, there are two classes of order three elements in $G$. So $^2G_2(q^2)$, $x\in (\tilde{A_1})^3$, yields a final class of exceptions.
\subsection{General Finite Groups}
We have now classified all order three elements in the almost simple groups which normalize no nontrivial 2-subgroup. This enables us to prove the following general theorem concerning finite groups. Note when $O_3(G)$ is nontrivial, it is quite messy to pin down necessary conditions for the existence of order three elements normalizing no nontrivial 2-subgroup. 

\begin{customthm}{2}
Assume that $G$ is a finite group with components $L_1,...,L_n$, and $O_3(G) =1$. Let $x\in G$ be an order three element normalizing no nontrivial 2-subgroup of $G$. Then 
\begin{enumerate}[(i)]
\item if $N$ is any normal subgroup of $G$ with order prime to three, then $N$ has order prime to six;
\item $x$ normalizes every component $L_i$ of $G$;
\item The image of $\langle L_i, x\rangle$ in $Aut(L_i)$ appears on the list of exceptions in Theorem 1, for every component $L_i$ of $G$.
\end{enumerate} 
\end{customthm}

\begin{proof}
In the following assume $O_3(G)=1$, and $x\in G$ is an order three element normalizing no nontrivial 2-subgroup. 
\\
\\
$(i)$  Assume $N$ is a normal subgroup of $G$ with order prime to three. Furthermore, assume six divides the order of $N$, and $P$ is a Sylow-2 subgroup of $N$. Then by the Frattini argument $G=NN_G(P)$. Since $N$ has order prime to 3, a conjugate of $x$ will normalize $P$. This contradicts our assumption that the class of $x$ normalizes no nontrivial 2-group. So we may assume $N$ has order prime to six.  
\\
\\
 $(ii)$ Assume $x\in G$ does not normalize some component $L_i$ of $G$, so that $L_i$, $L_i^x$ and $L_i^{x^2}$ are distinct commuting components. Let $w$ be an involution in $L_i$. Then $y=w\cdot w^x \cdot w^{x^2}$ is an involution in $G$, and $yx= xy$. 
\\
\\
$(iii)$ Pick some component $L_i$, where $L_i$ is a perfect central extension of the simple group $S_i$. Since $x$ normalizes $L_i$, $\langle L_i, x\rangle\subset  N_G(L_i)$. Furthermore, $Aut(L_i)$ can be embedded in $Aut(S_i)$. So the image of $\langle L_i,x\rangle$ in $N_G(L_i)/C_G(L_i)\subset Aut(L_i)\hookrightarrow Aut(S_i)$ must appear on the list of exceptions to Theorem 1.
\end{proof} 
Note while Theorem 2 gives necessary conditions for the existence order three elements which normalize no nontrivial 2-group, it does not claim these conditions to be sufficient. Unfortunately, no version of $(i)-(iii)$ will yield such conditions. For instance, if $G= (\mathbb{Z}_5\times \mathbb{Z}_5):\mathrm{Alt_4}$, then $G$ satisfies $(i)-(iii)$, but the single class of order three elements in $G$ normalizes a four group. In some sense this counterexample is generated by the occurrence of an order prime to six normal subgroup in $G$. However, even under the stronger assumption that no such prime to six normal subgroups appear, the conditions would still not be sufficient. 
\par For instance, let $x$ be in the single class of order three elements in $G= PSL_2(2^4)$. Let $H= G\ \wr \ \mathrm{Sym}_2$, where $\mathrm{Sym}_2$ acts by permuting the copies of $PSL_2(2^4)$. Then $y=(x,x)\in H$ is order three,  $y,H$ satisfy conditions $(i)-(iii)$ above, and $H$ contains no order prime to six normal subgroups. However $y$ centralizes the involution swapping the two copies of $PSL_2(2^4)$. Hence there is no obvious way to extend our characterization of order three elements in the almost simple groups which normalize no non-trivial 2-subgroup to a complete description of such elements in more general classes of finite groups, even under the assumption that $O_3(G)=1$.


\begin{thebibliography}{99}
 \bibitem{borel}
  A. Borel, Linear Algebraic Groups, W.A. Benjamin, Inc., New York, 1969. 
  \bibitem{brauer} 
  R. Brauer, Representations of Finite Groups, Lectures on Modern Mathematics, vol. I, Wiley, New York, 1963, pp. 133-175.
\bibitem{atlas}
J. Conway, R, Curtis, S. Norton, R. Parker and R Wilson, ATLAS of Finite Groups: Maximal Subgroups and Ordinary Characters for Simple Groups, Oxford University Press, Oxford, 1986.
   \bibitem{gorenstein2}
  D. Gorenstein,  R. Lyons, and R. Solomon, The Classification of the Finite Simple Groups, Number 3,  Mathematical Surveys and Monographs, Vol. 40, No. 3, American Mathematical Society, Providence, R.I. 1998. 
  \bibitem{humphreys} J. Humphreys, Conjugacy Classes in Semisimple Algebraic Groups, Mathematical Surveys and Monographs, Vol. 43 American Mathematical Society, Providence, R.I. 1995. 
\bibitem{liebeck}
M. Liebeck and G. Seitz, Unipotent and Nilpotent Classes in Simple Algebraic Groups and Lie Algebras, Mathematical Surveys and Monographs, Vol. 180 American Mathematical Society, Providence, R.I. 2012. 
\bibitem{lubeck} F. Lübeck, Online data for groups of Lie type,\\ \texttt{(http://www.math.rwth-aachen.de:8001/~Frank.Luebeck/chev).}
\bibitem{malle}
G. Malle and D. Testerman, Linear Algebraic Groups and Finite Groups of Lie Type, Cambridge University Press, Cambridge, 2011. 
\bibitem{robinson}
G. Robinson, The number of $p$-blocks with a given defect group, \textit{Journal of Algebra}, 84:2 (1983), 493-502.
\bibitem{robinson2}
G. Robinson, Involutions, weights and $p$-local structure, \textit{Algebra and Number Theory}, 5:8 (2011), 1063-1068. 
\bibitem{wilson} R. Wilson, P. Walsh, J. Tripp, I. Suleiman, R. Parker, S. Norton, S. Nickerson, S. Linton, J. Bray, R. Abbott, ATLAS of Finite Group Representations - Version 3,
\\\texttt{(http://brauer.maths.qmul.ac.uk/Atlas/v3/)}.



\end{thebibliography}
\end{document}